\documentclass[10pt,oneside,a4paper]{amsart}
\linespread{1.2}

\usepackage{latexsym}
\usepackage{amsfonts,amsmath,amssymb,indentfirst, tikz}
\usepackage[english]{babel}
\usepackage[all]{xy}
\usepackage[pdftex]{hyperref}
\usepackage{tikz-cd}

\newcommand{\bdism}{\begin{displaymath}}
\newcommand{\edism}{\end{displaymath}}
\newcommand{\cc}{\mathbb{C}}
\newcommand{\rr}{\mathbb{R}}

\newcommand{\zz}{\mathbb{Z}}

\newcommand{\pp}{\mathbb{P}}

\newcommand{\B}{\mathbb{B}}

\newcommand{\Del}{\Delta}
\newcommand{\Gam}{\Gamma}

\newcommand{\sig}{\sigma}
\newcommand{\om}{\omega}

\newcommand{\ol}{\overline}

\newcommand{\ssm}{\smallsetminus}

\DeclareMathOperator{\PU}{PU}

\newtheorem{theorem}{Theorem}[section]
\newtheorem{proposition}[theorem]{Proposition}
\newtheorem{corollary}[theorem]{Corollary}
\newtheorem{lemma}[theorem]{Lemma}

\address{Department of Mathematics, Stony Brook University, Stony Brook, NY 11794 - 3651} \email{lucafabrizio.dicerbo@stonybrook.edu}
\address{Department of Mathematics 1805 N. Broad Street, Philadelphia, PA 10122, USA} \email{mstover@temple.edu}

\author{Luca F. Di Cerbo, Matthew Stover}

\title{\bf Punctured spheres in complex hyperbolic surfaces and bielliptic ball quotient compactifications}

\begin{document}

\begin{abstract}
In this paper, we study punctured spheres in two dimensional ball quotient compactifications $(X, D)$. For example, we show that smooth toroidal compactifications of ball quotients cannot contain properly holomorphically embedded $3$-punctured spheres. We also use totally geodesic punctured spheres to prove ampleness of $K_X + \alpha D$ for $\alpha \in (\frac{1}{4}, 1)$, giving a sharp version of a theorem of the first author with G.\ Di Cerbo. Finally, we produce the first examples of bielliptic ball quotient compactifications modeled on the Gaussian integers.
\end{abstract}

\maketitle


\section{Introduction}
\pagestyle{myheadings} \markboth{Right}{\textbf{Punctured spheres in complex hyperbolic surfaces}}
\pagenumbering{arabic}

Throughout this paper, a \emph{ball quotient compactification} will mean the smooth toroidal compactification of a quotient $\B^2 / \Gam$ of the unit ball in $\cc^2$ with its Bergman metric. Here $\Gam \subset \PU(2,1)$ is a torsion-free nonuniform lattice such that all parabolic elements are rotation-free. See \cite{Ash} and \cite{Mok} for details.

The primary purpose of this paper is to study holomorphically immersed or embedded spheres in ball quotient compactifications. For instance, we will prove the following results.

\begin{theorem}\label{thm:No3s}
Let $X$ be a ball quotient compactification with compactification divisor $D$.
\begin{enumerate}
\item If $C_0 \subset X \ssm D$ is a holomorphically immersed totally geodesic submanifold, then $C_0$ has an even number of cusps.

\item There are no properly holomorphically embedded $3$-punctured spheres on $X \ssm D$ arising from smooth rational curves on $X$.

\item If $X \ssm D$ contains a totally geodesic 4-punctured $\pp^{1}$, then $X$ is not minimal and the totally geodesic 4-punctured $\pp^{1}$ determines one of the exceptional curves on $X$.

\item If $X \ssm D$ contains a totally geodesic 6-punctured $\pp^{1}$, then $K_X$ is not ample and the totally geodesic 6-punctured $\pp^{1}$ determines a smooth rational $(-2)$-curve on $X$.

\end{enumerate}
\end{theorem}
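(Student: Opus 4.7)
The unifying tool is the logarithmic proportionality for smooth toroidal ball quotient compactifications: on $(X, D)$ the complete Bergman K\"ahler--Einstein metric $\om_{\mathrm{KE}}$ on $X \ssm D$ extends across $D$ in the sense of Mumford, and with the Einstein normalization $\mathrm{Ric}(\om_{\mathrm{KE}}) = -\om_{\mathrm{KE}}$ one has $c_1(K_X + D) = [\om_{\mathrm{KE}}]/(2\pi)$. On a totally geodesic immersion $C_0 \hookrightarrow X\ssm D$, the induced metric has constant Gaussian curvature $-\tfrac{2}{3}$, so Gauss--Bonnet converts the K\"ahler area into
\bdism
(K_X + D)\cdot\overline{C}_0 \;=\; \frac{1}{2\pi}\int_{\overline{C}_0} \om_{\mathrm{KE}} \;=\; -\tfrac{3}{2}\chi(C_0) \;=\; \tfrac{3}{2}(2g - 2 + n),
\edism
where $g$ and $n$ are the genus and number of cusps of $C_0$.

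Parts (1), (3), (4) then follow quickly. For (1), the left-hand side is a Cartier intersection number, hence an integer, which forces $2g-2+n$ to be even and hence $n$ to be even. For (3) and (4), take $\overline{C}_0 \cong \pp^1$: combining $D\cdot\overline{C}_0 = n$ (transverse because all parabolics of $\Gam$ are unipotent) with the formula above yields $K_X\cdot\overline{C}_0 = \tfrac{n}{2} - 3$, and adjunction gives $\overline{C}_0^{\,2} = 1 - \tfrac{n}{2}$. Setting $n=4$ produces an exceptional curve of the first kind, so $X$ is not minimal; setting $n=6$ produces a smooth rational $(-2)$-curve, so $K_X$ cannot be ample.

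For (2), I would argue via the Ahlfors--Schwarz lemma: a holomorphic map from a complete hyperbolic Riemann surface into $X\ssm D$ with the Bergman metric is area non-increasing, with equality exactly on totally geodesic images. Applied to a smooth rational $\overline{C}\subset X$ cutting out a $3$-punctured sphere on $X \ssm D$, this gives $(K_X + D)\cdot\overline{C} \le \tfrac{3}{2}$, and the inequality is strict by (1) since three cusps cannot be realized geodesically. Since $K_X + D$ is nef with null locus $\supp(D)$ and $\overline{C}\not\subset D$, one also has $(K_X + D)\cdot\overline{C} \ge 1$, so integrality pins the value at $1$ and adjunction forces $\overline{C}^{\,2} = 0$.

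The main obstacle is then excluding this last configuration $(\overline{C}\cong \pp^1,\ \overline{C}^{\,2} = 0,\ \overline{C}\cdot D = 3)$. Triviality of $N_{\overline{C}/X}$ makes $\overline{C}$ move in a one-parameter family, producing a $\pp^1$-fibration of $X$ whose every fiber is a $3$-punctured sphere on $X\ssm D$. Combining the conormal exact sequence with the Griffiths semi-negativity of $\Omega^1_X(\log D)$ inherited from the Bergman metric forces the splitting $\Omega^1_X(\log D)\big|_{\overline{C}} \cong \oo \oplus \oo(1)$, which gives a holomorphic foliation on a neighborhood of $\overline{C}$ tangent to the family. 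I expect to contradict this either by Miyaoka-style rigidity of foliations on log-general-type surfaces, or by analyzing the three parabolic generators of the image of $\pi_1$ of a fiber inside $\Gam$ and showing no such configuration can survive in a rotation-free torsion-free lattice. Pinning down this foliation/parabolics step, rather than the Schwarz count, is where the proof is most delicate.
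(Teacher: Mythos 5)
Your treatment of parts (1), (3) and (4) is correct and is essentially the equality case of the paper's Proposition \ref{bound}: your Gauss--Bonnet identity $(K_X+D)\cdot \ol{C}_0=\tfrac32(2g-2+n)$ for totally geodesic curves is the same relation, and the integrality and adjunction deductions match the paper's. Your version of (1) is in fact a little cleaner, since integrating $f^*\om$ over the source curve handles immersions directly (one only needs that the closure of the image is an algebraic curve in $X$), whereas the paper first promotes the immersion to an embedding in a finite \'etale cover via subgroup separability (Bergeron, Scott) before applying Proposition \ref{bound}.

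Part (2) contains a genuine gap, and it sits exactly where you flag it. Your Schwarz-lemma count does pin down $(K_X+D)\cdot\ol{C}=1$, granting two points you should justify: the lower bound $\ge 1$ comes from strict positivity of the Bergman current on $X\ssm D$ (not from nefness alone), and $\int_{\ol{C}\ssm D}\om$ computes the intersection number because the current has log poles (vanishing Lelong numbers) along $D$. Note also that $\ol{C}^{\,2}=D\cdot\ol{C}-3$, so $\ol{C}^{\,2}=0$ is equivalent to transversality and you must separately dispose of the tangential case $\ol{C}^{\,2}\ge 1$, which you assert rather than prove. But the decisive step --- excluding a smooth rational $\ol{C}$ with $\ol{C}^{\,2}=0$ meeting $D$ transversally in three points --- is left as two speculative strategies (foliation rigidity, or an analysis of parabolics in $\Gam$), neither of which is carried out; as written the proof of (2) is incomplete. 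The paper's resolution is elementary surface classification rather than anything of this kind: $K_X\cdot\ol{C}=-2<0$ together with $\ol{C}^{\,2}=0$ kills all plurigenera, so $X$ is ruled, and Zariski's lemma makes $\ol{C}$ a smooth fiber; minimal ruled surfaces are excluded because $X$ carries elliptic curves of negative self-intersection and $X\ssm D$ is negatively curved; hence some fiber contains an exceptional curve $E_1$ of the first kind, which by Proposition \ref{exceptional2} satisfies $D\cdot E_1\ge 4$, contradicting $D\cdot E_1\le D\cdot F=D\cdot\ol{C}=3$ since all fibers are numerically equivalent and no component of $D$ lies in a fiber. Without this argument, or an actual completion of one of your proposed alternatives, part (2) remains unproven.
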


We prove these, along with a number of related results, in \S \ref{sec:Punctures}. The method of proof is by studying the restriction of the complex hyperbolic metric on $X \ssm D$ to a curve on $X$. One can prove part (1) using the Hirzebruch--H\"ofer relative proportionality principle \cite{HH}, but we give an elementary differential-geometric proof for the sake of completeness. One should compare Theorem \ref{thm:No3s}(2) with the well-understood case of $3$-punctured spheres on finite-volume hyperbolic $3$-manifolds. It is a theorem of Colin Adams \cite{Ada85} that any properly embedded essential $3$-punctured sphere in a hyperbolic $3$-manifold must be isotopic to one that is totally geodesic. One can then produce many examples of complete finite-volume hyperbolic $3$-manifolds containing totally geodesic $3$-punctured spheres via knots in $S^3$. It would also be interesting to know whether or not any properly embedded essential $3$-punctured sphere in a ball quotient must be real totally geodesic. We suspect one could determine this via the Toledo invariant.

\medskip

One consequence of Theorem \ref{thm:No3s} is the following sharp version of the ampleness theorem of the first author and G.\ Di Cerbo \cite[Thm.\ 1.1]{DiC}, which gave ampleness of $K_X + \alpha D$ for $\alpha \in (\frac{1}{3}, 1)$.

\begin{theorem}\label{sharp}
Let $(X, D)$ be a smooth toroidal compactification of a ball quotient surface. Then $K_{X}+\alpha D$ is ample for all $\alpha\in \big(\frac{1}{4}, 1\big)$. Moreover, this ampleness range is sharp, i.e., there are ball quotient compactifications for which $K_{X}+\frac{1}{4}D$ is nef but not ample.
\end{theorem}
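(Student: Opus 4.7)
The plan is to verify Nakai-Moishezon for $K_X + \alpha D$ by writing it as a positive combination of $K_X + D$ and $K_X + \tfrac{1}{4}D$, thereby reducing the problem to nefness of $K_X + \tfrac{1}{4}D$; the latter will follow from Theorem \ref{thm:No3s} combined with Mori theory on $X$ and Kobayashi hyperbolicity of $\B^2/\Gam$.

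For $\alpha \in (\tfrac{1}{4}, 1)$, I would set $t = (4\alpha - 1)/3 \in (0,1)$ and use the identity
\[
K_X + \alpha D \;=\; t\,(K_X + D) \;+\; (1-t)\bigl(K_X + \tfrac{1}{4}D\bigr).
\]
Standard log-proportionality for ball quotient compactifications gives that $K_X + D$ is big and nef, with $(K_X + D) \cdot C > 0$ for every irreducible curve $C$ not contained in $D$; and elliptic adjunction together with the pairwise disjointness of the boundary components yields $K_X \cdot D_i = -D_i^2$, hence $(K_X + D) \cdot D_i = 0$ and $(K_X + \tfrac{1}{4}D) \cdot D_i = -\tfrac{3}{4} D_i^2 > 0$. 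Assuming nefness of $K_X + \tfrac{1}{4}D$, each irreducible curve will have strictly positive intersection with $K_X + \alpha D$ (the first summand contributes positively off of $D$, the second positively on $D$), and $(K_X + \alpha D)^2 \geq t^2 (K_X + D)^2 > 0$ by expanding the convex combination, so Nakai-Moishezon yields ampleness.

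The main task is therefore nefness of $K_X + \tfrac{1}{4}D$. For an irreducible curve $C \not\subset D$, write $m = D \cdot C$ and observe $(K_X + \tfrac{1}{4}D) \cdot C = K_X \cdot C + m/4$. If $K_X \cdot C \geq 0$, this is clearly non-negative. If $K_X \cdot C < 0$, then since $X$ is a surface of general type (a standard feature of ball quotient compactifications), surface-theoretic Mori theory identifies $C$ as a smooth $(-1)$-exceptional curve, and nefness reduces to showing $D \cdot C \geq 4$. Kobayashi hyperbolicity of $\B^2/\Gam$ (via Liouville's theorem on $\B^2$) excludes smooth rational curves meeting $D$ in at most two set-theoretic points, and Theorem \ref{thm:No3s}(2) excludes those meeting $D$ in exactly three points. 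Hence every smooth $(-1)$-curve meets $D$ in at least four points, and $(K_X + \tfrac{1}{4}D) \cdot C \geq 0$.

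For sharpness, I would exhibit a ball quotient compactification containing a totally geodesic $4$-punctured $\pp^1$. By Theorem \ref{thm:No3s}(3), the closure of such a curve is a smooth $(-1)$-exceptional $E \subset X$ with $D \cdot E = 4$, so $(K_X + \tfrac{1}{4}D) \cdot E = -1 + 4 \cdot \tfrac{1}{4} = 0$. Combined with the nefness established above, $K_X + \tfrac{1}{4}D$ will then be nef but not ample on this $X$. The main obstacle I foresee is producing such an explicit example; candidates include the new bielliptic ball quotient compactifications modelled on the Gaussian integers announced in the abstract, or Picard modular surfaces from the literature.
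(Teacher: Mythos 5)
Your assembly of the argument (the convex decomposition $K_X+\alpha D = t(K_X+D)+(1-t)(K_X+\tfrac14 D)$, Nakai--Moishezon, the adjunction computation on the boundary components, and the count $D\cdot E\ge 4$ for a genuine exceptional curve of the first kind via hyperbolicity and Theorem \ref{thm:No3s}(2)) is sound, and the sharpness discussion matches the paper's. But there is a genuine gap in the reduction of nefness of $K_X+\tfrac14 D$ to $(-1)$-curves. You assert that $X$ is of general type ``a standard feature of ball quotient compactifications''; this is false, and indeed the central examples of this paper (and of Hirzebruch and Holzapfel) are ball quotient compactifications birational to Abelian or bielliptic surfaces, so $\kappa(X)=0$; nor is $\kappa(X)=-\infty$ excluded at this stage of the argument. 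Without general type you cannot conclude that an irreducible curve with $K_X\cdot C<0$ is a smooth $(-1)$-curve. Concretely, if $X$ carried a ruling with an irreducible fiber $C$ (so $C^2=0$, $K_X\cdot C=-2$), your constraints only force $D\cdot C\ge 4$, and with $D\cdot C=4$ one gets $(K_X+\tfrac14 D)\cdot C=-1<0$; such a curve passes every test you impose yet destroys nefness. So the case $K_X\cdot C<0$ with $C$ not a $(-1)$-curve must actually be ruled out, not waved away.

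The paper closes exactly this gap with Mori theory plus an external input: by \cite[Thm.\ 2.3, Cor.\ 2.8]{DiC}, every $K_X$-negative extremal ray of a smooth toroidal ball quotient compactification has length one and its contraction is the blowdown of a $(-1)$-curve (in particular fiber-type contractions and $\pp^2$ are excluded). One then writes an arbitrary curve class as $\sum a_i C_i + F$ with the $C_i$ extremal $(-1)$-curves and $K_X\cdot F\ge 0$, notes $(K_X+\alpha D)\cdot F=(1-\alpha)K_X\cdot F+\alpha(K_X+D)\cdot F\ge 0$ since $K_X+D$ is nef, and only then applies the bound $D\cdot C_i\ge 4$ (Proposition \ref{exceptional2}). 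If you replace your ``general type'' step with this cone-theorem decomposition and the length bound from \cite{DiC}, the rest of your proposal goes through.
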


Key in constructing examples where $K_X + \frac{1}{4} D$ is nef but not ample is the existence of ball quotient compactifications for which $X$ is birational to either an Abelian or bielliptic surface. In particular, we exploit the fact that all such known examples contain a holomorphically embedded $\pp^1$ defining a punctured curve on $X \ssm D$ that is totally geodesic in the complex hyperbolic metric. In fact, we will use Theorem \ref{thm:No3s} to show the following, which proves that this phenomenon is no coincidence.

\begin{theorem}\label{thm:AllGeodesic}
Let $(X, D)$ be a ball quotient compactification such that $X$ is birational to either an Abelian or bielliptic surface $Y$. Then $X$ is the blowup of $Y$ at $n$ distinct points for some $n \ge 1$. Every exceptional curve on $X$ meets $D$ transversally in four points and determines a totally geodesic $4$-punctured $\pp^1$ on $X \ssm D$. Conversely, every holomorphically embedded totally geodesic sphere on $X \ssm D$ is a $4$-punctured $\pp^1$ arising from the exceptional locus of the blowup.
\end{theorem}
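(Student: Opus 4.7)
The plan is to combine the Hirzebruch ball-quotient proportionality with an Ahlfors--Schwarz comparison in order to simultaneously pin down the blowup structure of $X\to Y$ and the totally geodesic character of each exceptional curve. Since abelian and bielliptic surfaces are minimal of Kodaira dimension zero, $Y$ is the unique minimal model of $X$ and the birational identification is a morphism $\pi\colon X\to Y$ factoring as $n$ successive blowdowns of $(-1)$-curves. Using $K_Y^2=c_2(Y)=0$, adjunction on each elliptic component of $D$ (which gives $K_X\cdot D=-D^2$), and the proportionality $(K_X+D)^2=3c_2(X)=3n$, I first compute $D^2=-4n$ and $K_X\cdot D=4n$. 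Positivity of $(K_X+D)^2$ forces $n\ge 1$.

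The crucial step is to rule out infinitely-near blowups. For each irreducible exceptional component $E_i$ of $\pi$, set $k_i:=-E_i^2$, $m_i:=E_i\cdot D$, and let $c_i\ge 1$ be the discrepancy appearing in $K_X=\sum_i c_i E_i$. The Ahlfors--Schwarz comparison between the ambient complex hyperbolic metric on $X\ssm D$ and the intrinsic Poincar\'e metric on $E_i\ssm D$ --- the same mechanism used to prove Theorem \ref{thm:No3s} --- yields $m_i\ge 2k_i+2$, with equality if and only if $E_i\ssm D$ is totally geodesic and $E_i$ meets $D$ transversally in exactly $m_i$ points. A short induction on the blowup sequence shows that $\sum_i c_i(k_i+1)\ge 2n$, with equality if and only if $\pi$ blows up $n$ distinct points: a distinct blowup adds $2$ to both sides, while an infinitely-near blowup at a point of an existing exceptional curve $E_j$ increases the left-hand side by $c_j+2(c_j+1)=3c_j+2\ge 5$ against only $2$ on the right. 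Combining this with the first paragraph gives
\[ 4n \;=\; K_X\cdot D \;=\; \sum_i c_i m_i \;\ge\; \sum_i c_i(2k_i+2) \;=\; 2\sum_i c_i(k_i+1) \;\ge\; 4n, \]
so every inequality is an equality. Hence $\pi$ is the blowup at $n$ distinct points, each $m_i=4$, and each $E_i\ssm D$ is a totally geodesic $4$-punctured $\pp^1$ meeting $D$ transversally.

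For the converse, let $C\subset X$ be the closure of an embedded totally geodesic sphere in $X\ssm D$ with $k$ punctures. Equality in Ahlfors--Schwarz combined with $\pp^1$-adjunction forces $C^2=1-k/2<0$. Since the universal cover of $Y$ is $\cc^2$, there are no nonconstant holomorphic maps $\pp^1\to Y$, so $\pi$ contracts $C$; hence $C$ lies in the exceptional locus of $\pi$, which by the second paragraph consists entirely of $(-1)$-curves. This forces $C^2=-1$ and $k=4$, as claimed. The main obstacle I anticipate is the inductive bookkeeping in the second paragraph, which requires tracking in parallel how the discrepancy of each infinitely-near exceptional curve and the self-intersection of its parent change along the blowup sequence.
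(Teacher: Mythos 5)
Your proposal is correct and follows essentially the same route as the paper: log-BMY equality gives $-D^2 = 4n$, adjunction expresses $K_X \cdot D$ in terms of the exceptional components, the relative proportionality bound $D \cdot E_i \ge 2 - 2E_i^2 \ge 4$ squeezes everything to equality, and an induction on the blowup chain rules out infinitely-near points. The only (cosmetic) difference is the quantity tracked in that induction --- you bound $\sum_i c_i(k_i+1)$, whereas the paper tracks the discrepancies $\alpha_i$ alone and shows $\alpha_i \ge 2$ for any infinitely-near blowup.
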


We briefly recall the history of the existence of ball quotient compactifications birational to an Abelian or bielliptic surface. The first Abelian example was constructed by Hirzebruch \cite{Hir84}, and it is closely related to the arithmetic of the Eisenstein integers. Later, Holzapfel constructed an example based on the Gaussian integers \cite{Hol04}. The first bielliptic ball quotient compactifications were discovered only very recently in \cite{DS15}. These examples emerged in the study of minimal volume complex hyperbolic surfaces with cusps, and they are all modeled on the Eisenstein integers. See \cite{DS16, DS17} for other applications.

Recall that bielliptic surfaces are the minimal projective surfaces with Kodaira dimension $\kappa = 0$ and irregularity $q = 1$. A classical result of Bagnera and de Franchis from 1907 classifies bielliptic surfaces into seven topological types, characterized by a finite group $G$ such that $X$ is the quotient of a product of two elliptic curves by a free action of $G$. Moreover, $G$ must be $\zz/2$, $\zz/3$, $(\zz/3)^2$, $\zz/4$, $\zz/4 \times \zz/2$, $(\zz/2)^2$, or $\zz/6$ (see \S \ref{sec:MainEx} for the precise classification). The examples in \cite{DS15} are associated with the groups $\zz / 3$ and $(\zz / 3)^2$. 

In \S \ref{sec:MainEx} we construct the first examples of bielliptic ball quotient compactifications modeled on the Gaussian integers.  In the Bagnera-de Franchis classification there are exactly four group actions associated with the Gaussian integers, and we construct examples for each of these four topological types. The associated ball quotients are not commensurable with those studied in \cite{DS15}, but are still of relatively small volume. We also discuss our examples in relation with Holzapfel's example of Abelian type \cite{Hol04}. In terms of the classification of bielliptic surfaces, this gives the following.

\begin{theorem}\label{thm:MainExists}
For each of the groups $\zz/2$, $(\zz/2)^2$, $\zz/4$, and $\zz/4 \times \zz/2$, there exists a ball quotient compactification with minimal model a bielliptic surface of that type.
\end{theorem}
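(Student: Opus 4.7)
The plan is to realize each of the four Bagnera--de Franchis types of Gaussian bielliptic surface as the minimal model of a ball quotient compactification by extending a known ball quotient whose minimal model is the Abelian surface $Y = E \times E$, where $E$ is the elliptic curve with complex multiplication by $\zz[i]$. The natural starting point is Holzapfel's Gaussian example $(Z, D_Z)$ with lattice $\Gam_H \subset \PU(2,1)$ defined over $\qq(i)$; following the strategy used in \cite{DS15} for the Eisenstein case, I would find subgroups $G$ of the commensurator $N_{\PU(2,1)}(\Gam_H)/\Gam_H$ that act freely on $Z \ssm D_Z$, preserve the cuspidal divisor (possibly permuting its components), and descend via the blowdown $Z \to Y$ to a Bagnera--de Franchis $G$-action on $Y$.

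First, I would enumerate the candidate affine automorphisms of $Y$. Since $\aut(E,0) = \zz[i]^\times = \langle i \rangle$, the relevant actions are: for $\zz/2$, the map $(x,y) \mapsto (-x, y+t)$ with $t$ a non-trivial $2$-torsion point of $E$; for $\zz/4$, the map $(x,y) \mapsto (ix, y+t)$ with $t$ of order $4$; for $(\zz/2)^2$, the $\zz/2$ rotation composed with an additional independent $2$-torsion translation; and for $\zz/4 \times \zz/2$, the $\zz/4$ rotation composed with a compatible $2$-torsion translation. Freeness of each action on $Y$ is the standard Bagnera--de Franchis check that rotations are paired with translations of complementary order.

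Next, for each such $G$, I would lift the automorphism to a biholomorphism of $Z$ preserving the cuspidal divisor and then, working with $\Gam_H$ inside $\PU(2,1)$, define $\Gam_G$ by the short exact sequence
\[
1 \to \Gam_H \to \Gam_G \to G \to 1,
\]
with $\Gam_G \subset N_{\PU(2,1)}(\Gam_H)$. To conclude that $(Z/G, D_Z/G)$ is a ball quotient compactification with minimal model $Y/G$, I need to verify that $\Gam_G$ is torsion-free, which follows from freeness of $G$ on $Y$ together with the absence of new elliptic fixed points on $\B^2$ introduced by the lift, and that every parabolic element of $\Gam_G$ is rotation-free, which is checked by tracking how $G$ permutes the cusps of $\Gam_H$ and analysing the stabilizer of each cusp. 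Theorem \ref{thm:AllGeodesic} then provides a sanity check: the totally geodesic $4$-punctured spheres present in Holzapfel's example must persist in each $X_G$ coming from exceptional curves.

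The main obstacle will be realizing each of the four groups as genuine elements of $N_{\PU(2,1)}(\Gam_H)$ rather than only as abstract automorphisms of $Y$, and simultaneously ensuring that the rotation-free condition at the cusps is preserved. Not every affine automorphism of $Y$ preserving the blowup locus lifts to $\PU(2,1)$, so the key computation is an explicit description of the Picard modular arithmetic over $\zz[i]$ that exhibits, for each $G$, a suitable element of order $2$ or $4$ in the commensurator together with the required translations. Once these elements are in place, the verification of torsion-freeness and the cusp condition reduces to finite group-theoretic checks at each cusp of $\Gam_H$, and the identification of the minimal model of $X_G$ as a bielliptic surface of the prescribed Bagnera--de Franchis type follows directly from the induced $G$-action on $Y$.
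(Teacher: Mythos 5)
Your overall strategy---realize each type as a free Bagnera--de Franchis quotient of an Abelian ball quotient compactification modeled on $\zz[i]$---is the same one the paper follows, but two points need attention. First, the verification machinery you propose is much heavier than necessary and is left entirely unexecuted: you plan to exhibit each $G$ inside the normalizer of Holzapfel's lattice $\Gam_H$ in $\PU(2,1)$ and then check torsion-freeness and rotation-freeness of parabolics "cusp by cusp" in the arithmetic of the Gauss--Picard modular group. The paper never touches the lattice. It works entirely with the surfaces: it writes down the elliptic curve arrangement on $A = E \times E$ explicitly, checks that the $G$-action is free on $A$ and permutes the curves and their intersection points without fixed points, forms the quotient arrangement on the bielliptic surface, blows up, and then invokes the characterization of smooth toroidal compactifications by equality in log-BMY with $K_X + \mathcal{D}$ big and nef (Lemmas \ref{lem:AbelianEx}--\ref{example3}). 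Torsion-freeness and the rotation-free cusp condition come for free from this characterization; no commensurator computation is needed. If you insist on the lattice route you must actually produce the normalizing elements and carry out the cusp analysis, which is a substantial computation you have only gestured at.

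Second, and more seriously, your plan fails as stated for $\zz/4$ (and hence for $\zz/4 \times \zz/2$ and your intended $(\zz/2)^2$ construction): no order-four Bagnera--de Franchis automorphism $(w,z) \mapsto (iw + c, z + t)$ preserves Holzapfel's curve arrangement on $E \times E$, because translation by a point $t$ of exact order $4$ in $E = \cc/\zz[i]$ moves the horizontal curves (e.g.\ $(w, \tfrac{1}{2}) \mapsto (w, \tfrac{1}{2}+t)$) out of the configuration. So the automorphism does not normalize $\Gam_H$ and there is no short exact sequence $1 \to \Gam_H \to \Gam_G \to \zz/4 \to 1$ of the kind you want. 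The missing idea is to first pass to an isogenous Abelian surface: the paper replaces $A$ by $A^\prime$ corresponding to $\zz[1,i] \times \zz[4,i]$, pulls the arrangement back to fourteen elliptic curves meeting in $24$ points, and only then has an order-four automorphism preserving the configuration. The remaining two groups are then handled in the opposite direction from what you propose---not as further quotients, but as finite \'etale \emph{covers} of the $\zz/4$ example, found by explicit index-two subgroups of the $\zz/4$ bielliptic fundamental group that are isomorphic to $\zz/4 \times \zz/2$ and $(\zz/2)^2$ bielliptic groups. Without the passage to $A^\prime$ (or an equivalent device) your construction does not produce three of the four required examples.
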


We conclude this introduction by addressing some topological aspects connected with the existence of ball quotient compactifications with Kodaira dimension zero. Smooth projective surfaces of Kodaira dimension zero are birational to Abelian surfaces, bielliptic surfaces, K3 surfaces, and Enriques surfaces \cite[Ch.\ VIII]{Bea}. While much is now known about the existence of bielliptic and Abelian ball quotient compactifications, it remains unclear whether or not ball quotient compactifications birational to K3 or Enriques surfaces exist. We note however that one can use known examples of ball quotient compactifications birational to Abelian surfaces to construct ball quotient \emph{orbifolds} whose underlying analytic space admits a compactification by a Kummer K3 surface. For example, see \cite[\S 5.5.4A]{HolzBook}. 

This problem is connected with several topological problems regarding smooth toroidal compactifications. For example, it was previously claimed in the literature that ball quotient compactifications must have nonzero $1^{st}$ Betti number. This would imply the nonexistence of ball quotient compactifications birational to K3 or Enriques surfaces. Unfortunately, this claim is also not true. In the Appendix, we describe a ball quotient compactification of general type with irregularity zero and Euler number $48$, found using Magma \cite{Magma}.

\subsection*{Acknowledgments} The authors thank the International Centre for Theoretical Physics (ICTP) for the excellent working environment during the early stages of this collaboration, and the referee for pertinent comments on the manuscript. The first author would like to thank Maria Beatrice Pozzetti for asking whether or not bielliptic ball quotient compactifications modeled on the Gaussian integers exist, and Gabriele Di Cerbo for asking whether an improved version of Theorem 1.1 in \cite{DiC} could be proved in the case of surfaces. Finally, he would like to thank the Stony Brook University math department for generously encouraging him through the years and for providing an inspiring environment of brilliant math. The first author was partially supported by the S.-S. Chern Fellowship at ICTP. The second author was supported by the National Science Foundation under
Grant Number NSF 1361000 and Grant Number 523197 from the Simons Foundation/SFARI.

\section{Totally geodesic (and other) punctured spheres}\label{sec:Punctures}

To start, we briefly recall the notion of a general ball quotient compactification. Let $X$ be a smooth projective surface with canonical divisor $K_X$ and $D \subset X$ be a reduced simple normal crossings divisor. Then one has the logarithmic Chern numbers
\begin{align*}
\ol{c}_1^2(X, D) &= (K_X + D)^2 \\
\ol{c}_2(X, D) &= e(X \ssm D),
\end{align*}
where $e$ denotes the topological Euler number. When $K_X + D$ is big and nef one has the logarithmic Bogomolov--Miyaoka--Yau inequality $\ol{c}_1^2 \le 3 \ol{c}_2$ (henceforth \emph{log-BMY}). Moreover, equality occurs if and only if $X \ssm D$ is biholomorphic to the quotient $\B^2 / \Gam$, where $\Gam$ is a torsion-free nonuniform lattice  in $\PU(2,1)$ with all parabolic elements rotation-free acting by isometries on the unit ball $\B^2$ in $\cc^2$ equipped with the symmetric Bergman metric. Finally, it can be shown that in this case  $D$ is necessarily a disjoint union of reduced elliptic curves with negative self-intersection. One then says that $X$ is a \emph{smooth toroidal compactification} of $\B^2 / \Gam$ with \emph{compactification divisor} $D$. See \cite{Ash}, \cite{Mok}, or \cite{DS15} for the proofs of these statements and more details. In this paper we will shorten this to \emph{ball quotient compactification}, as we will not consider any other type of compactification.

\medskip

Let $X$ be a smooth projective variety with $D \subset X$ a divisor such that the pair $(X, D)$ is a ball quotient compactification, and let $\om_1$ denote the locally symmetric metric on $X\ssm D$. When regarded as a current on $X$, this K\"ahler metric is proportional to the cohomology class of $K_{X}+D$ \cite{Mumford}.

Consider a smooth embedded irreducible complete curve $C\hookrightarrow X$ not contained in the boundary divisor $D$. We assume that $C$ and $D$ intersect transversally, set
\[
\{p_{1}, \dots, p_{k}\} = C \cap D,
\]
and let $C_{k}$ denote the curve $C$ punctured at the points $p_{1}, \dots, p_{k}$. The Bergman metric $\omega_{1}$ then induces a smooth finite-volume K\"ahler metric $\omega_{C_{k}}$ on the punctured curve $C_{k}$ with associated Ricci form $\rho_{C_{k}}$. By \cite[Prop.\ IX.9.5]{Kobayashi}, we have
\begin{equation}\label{kobayashi}
\rho_{C_{k}}=-(\omega_{1}){|_{C_{k}}}-\mathcal{R},
\end{equation}
where $\mathcal{R}$ is a semidefinite form that vanishes if and only if $C$ is embedded and totally geodesic.

Observe that $\omega_{C_{k}}$ has finite volume. The finite volume Gauss--Bonnet theorem then implies that
\[
-\frac{1}{2\pi}\int_{C_{k}}\rho_{C_{k}}=-\chi(C_{k})=-\chi(C)+k=2g(C)-2+k.
\]
Next, recall that proportionality gives $c_{1}(K_{X}+D)=\frac{3}{4\pi}\omega_{1}$. Thus \eqref{kobayashi} gives
\begin{equation}\label{eq:kobayashi}
-\frac{1}{2\pi}\int_{C_{k}}\rho_{C_{k}}=2g(C)-2+k\ge\frac{1}{2\pi}\int_{C}\omega_{1}=\frac{2}{3}(K_{X}+D)\cdot C
\end{equation}
with equality if and only if $C$ is an embedded totally geodesic curve. Thus we have:

\begin{proposition}\label{bound}
Let $C$ be an embedded curve in a ball quotient surface compactification $(X, D)$ such that $C$ intersects $D$ transversally in $k$ points. Then
\[
3(g(C)-1)+\frac{k}{2}\ge K_{X}\cdot C
\]
with equality if and only if $C_{k}$ is totally geodesic in $X\ssm D$ with respect to its locally symmetric metric $\omega_{1}$.
\end{proposition}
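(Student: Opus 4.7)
The proposition follows almost immediately from equation \eqref{eq:kobayashi}, which has already been set up in the preceding discussion; the only remaining work is to convert the inequality from one involving $(K_X+D)\cdot C$ into one involving $K_X \cdot C$, using the transversality assumption on $C \cap D$.

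The plan is as follows. First, I would observe that since $C$ meets $D$ transversally in exactly $k$ points, the intersection number $C \cdot D$ equals $k$. Consequently,
\[
(K_X + D) \cdot C = K_X \cdot C + k.
\]
Substituting this into \eqref{eq:kobayashi} gives
\[
2g(C) - 2 + k \;\ge\; \frac{2}{3}\bigl(K_X \cdot C + k\bigr),
\]
with equality if and only if $C_k$ is totally geodesic in $X \ssm D$ with respect to $\omega_1$, by the characterization of equality in \eqref{eq:kobayashi} coming from \eqref{kobayashi} (the form $\mathcal{R}$ vanishes precisely in the totally geodesic case).

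Multiplying both sides by $\tfrac{3}{2}$ and rearranging then yields
\[
3\bigl(g(C) - 1\bigr) + \frac{k}{2} \;\ge\; K_X \cdot C,
\]
which is the claimed inequality, still with equality if and only if $C_k$ is totally geodesic in its locally symmetric metric. There is no genuine obstacle here: the substantive ingredients (the Kobayashi curvature inequality \eqref{kobayashi}, proportionality $c_1(K_X + D) = \tfrac{3}{4\pi}\omega_1$, and the finite-volume Gauss--Bonnet theorem applied to $\omega_{C_k}$) have already been invoked above. The only point to verify carefully is that finite volume of $\omega_{C_k}$ and the behavior of $\omega_1$ near the cusps of $X\ssm D$ do indeed permit the application of Gauss--Bonnet and equality of the current $\omega_1$ with $\tfrac{4\pi}{3}c_1(K_X+D)$, but both of these facts are standard in the toroidal compactification setting and have been used implicitly in the derivation of \eqref{eq:kobayashi}.
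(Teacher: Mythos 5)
Your proposal is correct and matches the paper's own (implicit) proof exactly: the paper simply states the proposition as an immediate consequence of \eqref{eq:kobayashi}, using $(K_X+D)\cdot C = K_X\cdot C + k$ from transversality and the same rearrangement you perform. The equality case is handled identically via the vanishing of $\mathcal{R}$ in \eqref{kobayashi}.
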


Proposition \ref{bound} can be alternatively formulated as follows.

\begin{proposition}\label{bound2}
Let $C$ be an embedded curve in a ball quotient surface compactification $(X, D)$ such that $C$ intersects $D$ transversally. We then have 
\[
3 C^{2}\ge -(K_{X}+D)\cdot C
\] 
with equality if and only if $C \cap (X \ssm D)$ is totally geodesic in $X\ssm D$ with respect to $\omega_{1}$.
\end{proposition}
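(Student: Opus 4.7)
The plan is to show that Proposition \ref{bound2} is an algebraic reformulation of Proposition \ref{bound}, using only the adjunction formula on $C$ and the transversality hypothesis. No new geometric input is required: the metric/curvature content has already been packaged into Proposition \ref{bound}, and the rest is bookkeeping.

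First I would set $k = C \cdot D$, which equals the number of intersection points of $C$ with $D$ since by hypothesis the intersection is transverse (this handles the case $k = 0$ as well). Since $C$ is smooth and embedded, the adjunction formula gives
\[
K_X \cdot C = 2g(C) - 2 - C^2.
\]
Substituting this into the inequality $3(g(C)-1) + k/2 \ge K_X \cdot C$ of Proposition \ref{bound} and rearranging terms yields
\[
2 C^2 + 2g(C) - 2 + k \ge 0,
\]
which can be rewritten as $2 C^2 \ge -(2g(C) - 2) - k = -(K_X \cdot C + C^2) - D \cdot C$. Adding $C^2$ to both sides produces exactly $3 C^2 \ge -(K_X + D) \cdot C$.

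For the equality case, the steps above are reversible identities (adjunction is an equality, and $C \cdot D = k$ is an equality under transversality), so equality in Proposition \ref{bound2} holds if and only if equality holds in Proposition \ref{bound}. The latter is equivalent to $C_k = C \cap (X \ssm D)$ being totally geodesic in $X \ssm D$ with respect to $\omega_1$, which is precisely the claimed equality condition. There is no real obstacle here; the only minor care needed is to ensure one is using the correct value of $k$ when $C$ is disjoint from $D$, where both sides of the rewriting remain valid.
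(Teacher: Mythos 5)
Your proposal is correct and follows essentially the same route as the paper: both derive Proposition \ref{bound2} from Proposition \ref{bound} by substituting the adjunction identity $g(C)-1=\tfrac{1}{2}(K_X\cdot C+C^2)$ and $k=D\cdot C$, then rearranging, with the equality case carried over verbatim since every step is an identity. The only difference is the order of the algebraic manipulations, which is immaterial.
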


\begin{proof}
First, we have that
\[
g(C)-1=\frac{K_{X}\cdot C+C^{2}}{2}.
\]
Next, we have that $k=D\cdot C$. Thus, the inequality given in Proposition \ref{bound} becomes
\[
\frac{3}{2}(K_{X}\cdot C+C^{2})+\frac{D\cdot C}{2}\ge K_{X}\cdot C,
\]
hence
\[
3C^{2}\ge -K_{X}\cdot C-D\cdot C,
\]
which completes the proof.
\end{proof}

This gives us the following consequence when $C$ is an exceptional curve on $X$.

\begin{corollary}\label{exceptional}
An exceptional curve of the first kind in a ball quotient compactification $X$ that intersects $D$ transversally must have at least four intersection points.
\end{corollary}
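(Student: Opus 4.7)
The plan is to apply Proposition \ref{bound2} directly to an exceptional curve of the first kind. Recall that such a curve $C$ is a smooth rational curve with $C^2 = -1$, so $C \cong \pp^1$. The adjunction formula yields
\[
-2 = 2g(C) - 2 = K_X \cdot C + C^2 = K_X \cdot C - 1,
\]
hence $K_X \cdot C = -1$.

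Next I would substitute these numerical invariants into the inequality of Proposition \ref{bound2}. Letting $k = D \cdot C$ denote the number of transverse intersection points of $C$ with $D$, we have
\[
3C^2 = -3 \ge -(K_X + D) \cdot C = -(-1 + k) = 1 - k,
\]
which rearranges to $k \ge 4$, as required.

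There is no real obstacle here: the argument is just plugging the numerical data of a $(-1)$-curve into Proposition \ref{bound2}. The only thing worth flagging in the write-up is that the transversality hypothesis of Proposition \ref{bound2} is satisfied by assumption, so $k$ is genuinely the intersection number $D \cdot C$.
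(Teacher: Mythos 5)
Your proof is correct and is essentially the paper's argument: the paper plugs the same numerical data of a $(-1)$-curve into Proposition \ref{bound}, whereas you use its equivalent reformulation Proposition \ref{bound2}, and both give $k \ge 4$ by the same one-line computation. No issues.
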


\begin{proof}
Let us denote the exceptional curve of the first kind in $X$ by $C$. Recall that $C\simeq\pp^{1}$ and $K_{X}\cdot C=C^{2}=-1$. Next, observe that $C$ must intersect $D$ at least three times, as $X\ssm D$ has negative curvature. Using Proposition \ref{bound}, we obtain $k\ge 4$ with equality if and only if $C_{k}$ is totally geodesic in $X\ssm D$ with respect to the Bergman metric.
\end{proof}

We now show that Corollary \ref{exceptional} holds even without the transversality requirement. More precisely, let $C$ be an exceptional curve of the first kind in the smooth compactification $(X, D)$. Then, there are integers $m_i \ge 1$ such that
\[
D\cap C = \sum^{k}_{i=1}m_{i}p_{i}.
\]
Then $m_i = 1$ if and only if $C$ meets $D$ transversally at $p_i$, and $m_i \ge 2$ otherwise. The generalized relative Hirzebruch--H\"ofer proportionality theorem given in Theorem 0.1 of \cite{Muller} is
\begin{equation}\label{relative}
3 C^{2}\ge -K_{X}\cdot C +\sum^{k}_{i=1}(2m_{i}-3)
\end{equation}
with equality if and only if $C\ssm \{p_1, ..., p_k \}$ is totally geodesic with respect to the Bergman metric on $X\ssm D$.

Therefore, if $C$ is an exceptional curve of the first kind we have
\[
 C^{2}=K_{X}\cdot C=-1,
\]
and then
\[
-4\ge \sum^{k}_{i=1}(2m_{i}-3),
\]
with equality if and only if $C\ssm \{p_1, ..., p_k \}$ is totally geodesic. We then have the following generalization of Corollary \ref{exceptional}.

\begin{proposition}\label{exceptional2}
An exceptional curve of the first kind in a ball quotient compactification $X$ must have at least four intersection points with the compactification divisor $D$. Moreover, it has exactly four intersection points if and only if it is totally geodesic.
\end{proposition}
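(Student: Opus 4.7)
The plan is to apply the generalized Hirzebruch--H\"ofer inequality (\ref{relative}) directly to $C$. Since $C$ is an exceptional curve of the first kind, $C^{2}=K_{X}\cdot C=-1$, and (\ref{relative}) collapses to
\[
\sum_{i=1}^{k}(2m_{i}-3)\le -4,
\]
with equality if and only if $C\ssm\{p_{1},\dots,p_{k}\}$ is totally geodesic. Since each $m_{i}\ge 1$ gives $2m_{i}-3\ge -1$, summing yields $-k\le\sum_{i}(2m_{i}-3)\le -4$, so $k\ge 4$. This handles the first assertion.

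For the equivalence I would treat the two directions separately. Suppose first that $k=4$. Then the chain of inequalities above forces $\sum_{i}(2m_{i}-3)=-4$ with every summand equal to $-1$; thus $m_{i}=1$ for all $i$ (so $C$ meets $D$ transversally at each $p_{i}$), and equality is achieved in (\ref{relative}), so $C\ssm\{p_{1},\dots,p_{4}\}$ is totally geodesic.

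Conversely, suppose $C\ssm\{p_{1},\dots,p_{k}\}$ is totally geodesic. The equality clause of (\ref{relative}) gives $2\sum_{i}m_{i}=3k-4$. The main obstacle in this direction is showing that each $m_{i}$ equals $1$; once this is established, substitution yields $2k=3k-4$ and so $k=4$. To justify transversality I would do a local computation at each cusp using the Siegel domain (Heisenberg) model for a neighborhood of a cusp of $\B^{2}/\Gam$: a totally geodesic complex geodesic disk that terminates at the cusp is represented by a vertical disk $\{z=z_{0}\}$ in Siegel coordinates, and after passing to the toroidal compactification such a disk meets the boundary elliptic curve transversally at a single point. Alternatively, once transversality has been checked one may simply invoke Proposition \ref{bound} applied to the genus-zero totally geodesic curve $C$ with $K_{X}\cdot C=-1$ to read off $k=4$ immediately. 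The differential-geometric verification of transversality at the cusps is the only nontrivial input beyond (\ref{relative}); everything else is numerical manipulation of the inequality.
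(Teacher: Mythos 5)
Your proposal is correct and follows essentially the same route as the paper: specialize the generalized Hirzebruch--H\"ofer inequality \eqref{relative} to $C^{2}=K_{X}\cdot C=-1$, use $2m_{i}-3\ge -1$ to get $k\ge 4$, and handle the equality case via transversality of totally geodesic curves at the boundary. You are in fact slightly more careful than the paper about the converse direction (where the paper implicitly relies on \cite[Lem.\ 4.4.8]{HolzBook} for transversality, which your Siegel-domain computation reproves).
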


We are now ready to prove Theorem \ref{thm:No3s}.

\begin{proof}[Proof of Theorem \ref{thm:No3s}]
We first prove (1). Suppose that $C_0$ is a holomorphically immersed totally geodesic submanifold of $X \ssm D$. When $C_0$ is embedded, \cite[Lem.\ 4.4.8]{HolzBook} gives that the compactification $C$ of $C_0$ in $X$ is smooth and that $C$ intersects $D$ transversally. The result is then immediate from Proposition \ref{bound}. Indeed, we have
\[
3(g(C)-1)+\frac{k}{2} = K_{X}\cdot C
\]
with $3(g(C)-1)$ and $K_{X}\cdot C$ integers, hence $k$ is even.

We now argue that we can pass to an \'etale finite cover $X^\prime \ssm D^\prime$ of $X \ssm D$ such that $C_0$ lifts to an embedded totally geodesic $C_0^\prime$ in $X^\prime \ssm D^\prime$ with $k$ cusps. Then $X^\prime$ is a ball quotient compactification and $C_0^\prime$ is a $k$-cusped embedded totally geodesic $k$-punctured $\pp^1$ in $X^\prime \ssm D^\prime$, which we have seen is impossible when $k$ is odd.

Indeed, it is well-known that totally geodesic immersions can be promoted to embeddings in finite covers. We sketch the argument. Let $C_0^\prime$ denote the $k$-punctured $\pp^1$ so that there is a totally geodesic immersion $f : C_0^\prime \to X \ssm D$ with image $C_0$, and let $\widetilde{f} : \B^1 \hookrightarrow \B^2$ be the associated totally geodesic embedding of universal coverings. Suppose
\[
\Gam = \pi_1(X \ssm D) \subset \PU(2,1)
\]
and let $\Del \subset \Gam$ be the stabilizer in $\Gam$ of $\widetilde{f}(\B^1)$. Then $\widetilde{f}(\B^1) / \Del$ is homotopy equivalent to $C_0^\prime$. Now, $\Del$ is \emph{separable} in $\Gam$, i.e., given $\gamma \in \Gam$ with $\gamma \notin \Del$, there exists a finite index subgroup $\Gam^\prime \subset \Gam$ such that $\Del \subset \Gam^\prime$ and $\gamma \notin \Gam^\prime$. See the ``Lemme Principal'' on p.\ 113 of \cite{Ber00}.

Next, we apply the classic topological characterization of separability due to Peter Scott \cite{Scott} to obtain a finite \'etale covering $X^\prime \ssm D^\prime$ of $X \ssm D$ into which $C_0^\prime$ embeds as a holomorphic totally geodesic submanifold. Replace $(X, D)$ with $(X^\prime, D^\prime)$ and $C_0$ with its lift to $X^\prime \ssm D^\prime$, which is biholomorphic to $C_0^\prime$. Crucially, $C_0$ and $C_0^\prime$ have the same number of cusps $k$. Then the application of Proposition \ref{bound} that began the proof now applies to $C_0^\prime$, and hence $k$ cannot be odd.

Now, we prove (2). We must show that a ball quotient compactification contains no holomorphically embedded $3$-punctured $\pp^1$ arising from a smooth rational curve on $X$. Let $C$ be such a rational curve. Applying the adjunction formula to Equation \eqref{relative} we have
\begin{equation}\label{eq:MSrational}
2 C^2 - 2 \ge \sum_{i=1}^3(2m_i-3) \ge -3,
\end{equation}
so that we necessarily have $C^{2}\geq 0$.

Then $K_X \cdot C < 0$ by adjunction, hence $K_X$ is not nef. Since $C$ is not a $-1$ curve, it follows that the plurigenera of $X$ are all zero, hence $X$ is a ruled surface. If $C^2 \ge 1$, then $C$ cannot be contained in a fiber of the ruling, so $X$ must be a rational ruled surface. Case-by-case analysis shows that the only rational ruled surface with a rational curve of positive self-intersection is $\pp^2$. Clearly $\pp^2$ is not a ball quotient compactification, since elliptic curves on $\pp^2$ have positive self-intersection.

Next, suppose that $C^2 = 0$. We then have
\[
-2 \ge \sum_{i=1}^3(2m_i-3),
\]
and the only possibility is $m_1 = m_2 = m_3 = 1$, so $C$ meets $D$ transversally in three points. We already saw that $X$ must be ruled over a curve $B$ and, since $X$ contains an elliptic curve, the Hurwitz formula implies that $B$ is rational or elliptic. Moreover, Zariski's lemma implies that $C$ must be a smooth fiber of the ruling.

We can rule out minimal ruled surfaces as in the proof of Theorem 2.3 in \cite{DiC}. We briefly sketch the argument. If $X$ is a Hirzebruch surface, then all irreducible curves of negative self-intersection are rational, which is impossible since $X$ contains elliptic curves of negative self-intersection. If $X$ is ruled over an elliptic curve, then $D$ must consist of exactly one zero section of the ruling. One then finds a once-punctured $\pp^1$ in $X \ssm D$, which is impossible since $X \ssm D$ admits a K\"ahler metric of negative curvature.

In other words, $X$ must be the blow up of a surface ruled over an elliptic or rational curve, hence there exists a fiber that contains at least one exceptional curve of the first kind. On the other hand, Proposition \ref{exceptional2} shows that the compactifying divisor must meet an exceptional curve of the first kind in at least four distinct points. Note that no component of the compactifying divisor can be contained in a fiber. Now, let
\[
F=E_{1}+\dots+E_{j}
\]
be a fiber containing at least one exceptional curve, which we assume is $E_{1}$. Fibers are numerically equivalent and the compactification divisor satisfies $D\cdot C=3$. We then have the contradiction
\[
3=D\cdot C=D\cdot F\geq D\cdot E_{1}\geq 4,
\]
which proves (2).

Finally we prove (3) and (4). Again by \cite[Lem.\ 4.4.8]{HolzBook} we know that the compactification $C$ of the totally geodesic punctured $\pp^{1}$ in $X$ is smooth and intersects $D$ transversally. If such a $\pp^{1}$ is punctured 4 times, then Proposition \ref{bound} implies that
\[
-3+2=K_{X}\cdot C,
\]
so $K_{X}\cdot C=C^{2}=-1$. This shows that $C$ is indeed an exceptional divisor in $X$. Similarly, if $C$ is the compactification of a totally geodesic 6-punctured sphere in $X$, Proposition \ref{bound} implies that
\[
-3+3=K_{X}\cdot C,
\]
so $C^{2}=-2$ by adjuction. This completes the proof of the theorem.
\end{proof}

We can now prove Theorem \ref{sharp}. First we briefly recall the notion of an \emph{extremal ray} in the cone $\ol{\mathrm{NE}}(X)$ of numerically effective $1$-cycles on $X$, along with its length. A ray $R$ in $\ol{\mathrm{NE}}(X)$ is \emph{extremal} when $x+y \in R$ and $x,y \in \ol{\mathrm{NE}}(X)$ implies that $x,y \in R$. Suppose $R$ is a ray such that $K_X \cdot Z < 0$ for any effective $1$-cycle $Z \in R$. Then the \emph{length} of $R$ is the minimum of $-K_X \cdot C$ for $C$ a rational curve in $R$.

\begin{proof}[Proof of Theorem \ref{sharp}]
The proof is a combination of Mori's theory together with the generalized relative proportionality principle. More precisely, Theorem 2.3 in \cite{DiC} implies that the length of an extremal ray in a smooth toroidal compactification of a ball quotient is at most one. Moreover, the associated contraction map is the blowup map at a codimension two smooth subvariety. See Corollary 2.8 in \cite{DiC}.

Therefore, in the case of surfaces we conclude that the extremal rays in a smooth ball quotient compactification with non-nef canonical divisor are exceptional curves of the first kind, that is, smooth rational curves $\{C_{i}\}$ in $X$ such that $K_{X}\cdot C_{i}=C_{i}^2=-1$. By the cone theorem \cite[Thm.\ 10-2-1]{Matsuki}, we have that
\begin{align}\label{cone}
\ol{\mathrm{NE}}(X)=\ol{\mathrm{NE}}(X)_{K_{X}\ge 0}+\sum \rr_{\ge 0}[C_{i}],
\end{align}
where $\ol{\mathrm{NE}}(X)_{K_{X}\ge 0}$ are the points in $\ol{\mathrm{NE}}(X)$ that pair nonnegatively with $K_{X}$, and where the $\{C_{i}\}$ are the (possibly countably many) extremal rays. See \cite[\S 2]{DiC} for further details and references.

Then any curve $C$ on $X$ is numerically equivalent to
\[
a_{1}C_{1}+...+a_{k}C_{k}+F,
\]
where the $a_{i}$ are positive real numbers, the $\{C_{i}\}$ are a finite collection of extremal rays, and $F$ is an effective divisor such that $K_{X}\cdot F\ge 0$. Thus, by Proposition \ref{exceptional2}, we conclude that $D \cdot C_{i}\ge 4$ for all $i$, with equality if and only if the punctured sphere defined by $C_{i}$ on $X \ssm D$ is totally geodesic.

Since $K_{X}+D$ is big and nef for any smooth ball quotient compactification, we also have that
\[
(K_{X}+\alpha D)\cdot C\ge \sum^{k}_{i=1}a_{i}(K_{X}+\alpha D)C_{i}\ge\sum^{k}_{i=1}a_{i}(-1+4\alpha), 
\]
for any curve $C$ in $X$, since $F$ is effective. Thus we also have that
\[
(K_{X}+\alpha D)\cdot C>0
\]
for any $\alpha\in (\frac{1}{4}, 1)$. This fact combined with the fact that $K_{X}+D$ is big immediately implies that $K_{X}+\alpha D$ is indeed an ample $\rr$-divisor for any $\alpha\in(\frac{1}{4}, 1)$. Moreover, if $K_{X}+\frac{1}{4}D$ is nef but not ample then there must exist an exceptional curve of the first kind $C_{i}$ such that $D\cdot C_{i}=4$. By Proposition \ref{exceptional2}, we have that such a curve $C_{i}$ determines a totally geodesic 4-punctured $\pp^{1}$ in $X\ssm D$.

In \S \ref{sec:MainEx}, we construct ball quotient compactifications with exceptional curves of the first kind intersecting the boundary divisor transversally in exactly four points. See \cite{Hir84, Holzapfel, Hol04, DS15} for other examples. Pick one such ball quotient $(X, D)$. Then we have 
\[
\left(K_{X}+\frac{1}{4}D\right)\cdot E_{i}=0
\]
for any of the exceptional divisors $\{E_{i}\}$ in $X$, so these exceptional divisors give totally geodesic 4-punctured $\pp^1$. Clearly $K_{X}+\frac{1}{4}D$ is not ample. In conclusion, the ampleness range given by $\alpha\in (\frac{1}{4}, 1)$ cannot be improved in general and the result is indeed sharp as claimed.
\end{proof}

We close this section with the proof of Theorem \ref{thm:AllGeodesic}.

\begin{proof}[Proof of Theorem \ref{thm:AllGeodesic}]
Let $(X, D)$ be a ball quotient compactification such that $X$ is birational to an Abelian or bielliptic surface $Y$. Then there is sequence of blowups $\pi : X \to Y$. Moreover, $K_X$ is numerically equivalent to the blowup divisor
\begin{equation}\label{eq:Canonical}
E = \sum_{i = 1}^n \alpha_i E_i,
\end{equation}
where the $E_i$ are distinct smoothly embedded rational curves and each $\alpha_i \ge 1$ is an integer. Since Abelian and bielliptic surfaces have universal cover $\cc^2$, every rational curve on $X$ is numerically equivalent to some $E_i$.

We then have
\begin{align*}
\ol{c}_1^2(X, D) &= -n - D^2 \\
3\, \ol{c}_2(X, D) &= 3n,
\end{align*}
so the log-BMY inequality gives $-D^2 = 4 n$. However, adjunction gives
\[
-D^2 = K_X \cdot D = \sum_{i = 1}^n \alpha_i D \cdot E_i.
\]
Theorem \ref{thm:No3s} then implies that $D \cdot E_i \ge 4$ for each $i$, hence $-D^2 \ge 4 n$ with equality if and only if $D \cdot E_i = 4$ and $\alpha_i = 1$ for all $i$. This proves that every $E_i$ defines a $4$-punctured $\pp^1$ on $X$, and the associated punctured curve on $X \ssm D$ is therefore totally geodesic.

From our conclusion that $\alpha_i = 1$ for all $i$, we claim that $X$ must be the blowup of $Y$ at $n$ distinct points, with the $E_i$ precisely the exceptional curves on $X$ of the first kind. Indeed, suppose that $\pi$ can be realized by the sequence of blowups
\[
X_n = Y \to X_{n-1} \to \cdots \to X_1 \to X_0 = Y
\]
with $\widehat{E}_j$ the exceptional divisor of the blowup $\sig_i : X_j \to X_{j-1}$. We then have the canonical divisor formula
\[
K_{X_i} = \sig_i^* K_{X_{i-1}} + \widehat{E}_i,
\]
where $\sig_i^*$ denotes the \emph{total} transform. Since $K_Y$ is trivial, it is immediate that $K_X = \sum E_i$ when $X$ is the blowup of $Y$ at $n$ distinct points.

On the other hand, suppose that
\[
K_{X_{j-1}} = \sum_{k = 1}^{j-1} \beta_k \widetilde{E}_k
\]
and that $\sig_j$ is the blowup of $X_{j-1}$ at a point on $\widetilde{E}_k$, $1 \le k \le j-1$. Then we have
\begin{align*}
K_{X_j} &= \sig^* K_{X_{j-1}} + \widehat{E}_j \\
&= 2 \widehat{E}_j + \sum_{k = 1}^{j-1} \beta_k \widetilde{E}_k^\prime,
\end{align*}
where $\widetilde{E}_k^\prime$ is the strict transform of $\widetilde{E}_k$. Indeed, the total transform of $\widetilde{E}_k$ is $\widetilde{E}_k^\prime + \widehat{E}_j$. Inducting on the length of the chain of blowups from $X$ to $Y$, it follows that $\alpha_j \ge 2$ in \eqref{eq:Canonical}, and that $\alpha_j = 1$ for all $j$ if and only if $X$ is the blowup of $Y$ at $n$ distinct points. This proves the claim, and therefore completes the proof of the theorem.
\end{proof}

\section{New bielliptic compactifications}\label{sec:MainEx}

In this section, we construct our main new examples of a bielliptic ball quotient compactifications. First we briefly recall the classification of bielliptic surfaces.

One characterization is that bielliptic surfaces are the minimal smooth projective surfaces with Kodaira dimension $0$ and irregularity $1$ (i.e., $1^{st}$ Betti number $2$). See \cite[Ch.\ VIII]{Bea}. By the classical theorem of Bagnera and de Franchis \cite[Thm.\ VI.20]{Bea}, a bielliptic surface is constructed as follows. Let $E_1 \times E_2$ be a product of two elliptic curves and $G$ be a group of translations of $E_2$ that also acts on $E_1$ as a group of automorphisms with quotient $\pp^1$. Then $X = (E_1 \times E_2) / G$ is a bielliptic surface. More specifically:

\begin{theorem}[Bagnera--de Franchis, 1907]\label{bagnera}
Let $E_{\lambda}$ and $E_{\tau}$ be elliptic curves associated with the lattices $\zz[1, \lambda]$ and $\zz[1, \tau]$, respectively, and G be a group of translations of $E_{\tau}$ acting on $E_{\lambda}$ such that $E_{\lambda}/G=\pp^{1}$. Then every bielliptic surface is of the form $(E_{\lambda}\times E_{\tau})/G$ where $G$ has one of the following types:
\begin{enumerate}

\item $G=\zz / 2$ acting on $E_{\lambda}$ by $x\rightarrow -x$;

\item $G=\zz / 2 \times\zz / 2$ acting on $E_{\lambda}$ by
\[
x\rightarrow -x \quad\textrm{and}\quad x\rightarrow x+\alpha_{2},
\]
where $\alpha_{2}$ is a $2$-torsion point;

\item $G=\zz / 4$ acting on $E_\lambda$ by $x\rightarrow \lambda x$, where $\lambda = i$;

\item $G=\zz / 4\times \zz / 2$ acting on $E_\lambda$ by
\[
x\rightarrow \lambda x \quad\textrm{and}\quad x\rightarrow x+\frac{1+\lambda}{2},
\]
where $\lambda = i$;

\item $G=\zz / 3$ acting on $E_{\lambda}$ by $x\rightarrow \lambda x$, where $\lambda=e^{\frac{2\pi i}{3}}$;

\item $G=\zz / 3\times \zz / 3$ acting on $E_{\lambda}$ by
\[
x\rightarrow \lambda x \quad\textrm{and}\quad x\rightarrow x+\frac{1-\lambda}{3},
\]
where $\lambda=e^{\frac{2\pi i}{3}}$;

\item $G=\zz / 6$ acting on $E_{\lambda}$ by $x\rightarrow \zeta x$, where $\lambda=e^{\frac{2\pi i}{3}}$ and $\zeta=e^{\frac{\pi i}{3}}$.

\end{enumerate}
\end{theorem}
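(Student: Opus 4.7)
The plan is to realize each of the four Gaussian bielliptic types as the minimal model of an explicit smooth toroidal ball quotient compactification. The strategy is a direct analogue of the construction in \cite{DS15} but performed with $\zz[i]$ in place of $\zz[e^{2\pi i/3}]$, relating the output to Holzapfel's Abelian example \cite{Hol04} through the classifying data of Theorem \ref{bagnera}. First I would fix a Hermitian form of signature $(2,1)$ over $\zz[i]$ whose unitary group contains a torsion-free, neat (i.e., parabolic rotation-free) arithmetic lattice $\Gam_0 \subset \PU(2,1)$ of relatively small covolume. A natural candidate is a lattice commensurable with the one underlying Holzapfel's compactification, so that the minimal model of the smooth toroidal compactification $(X_0, D_0)$ of $\B^2/\Gam_0$ is an Abelian surface $A_0 = E_i \times E_i$ (up to isogeny) and $X_0$ is obtained from $A_0$ by blowing up the finite set of points where the cusp cross-sections concentrate.

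Next, for each of the four Gaussian groups $G \in \{\zz/2,\, (\zz/2)^2,\, \zz/4,\, \zz/4 \times \zz/2\}$ in the Bagnera--de Franchis list, I would search the normalizer of $\Gam_0$ in $\PU(2,1)$ for a lattice $\Gam_0 \subset \Gam_G$ with $\Gam_G/\Gam_0 \cong G$ such that $\Gam_G$ is itself torsion-free with rotation-free parabolics and the induced action of $G$ on $A_0$ (after resolving the blowup relation) is conjugate to the Bagnera--de Franchis action of type $G$ on $E_\lambda \times E_\tau$ with $\lambda = i$. When such a $\Gam_G$ is found, $\B^2/\Gam_G$ admits a smooth toroidal compactification $(X_G, D_G)$, and by construction its minimal model is $A_0/G$, which is a bielliptic surface of the desired Gaussian type. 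The four types would be produced either by taking finite covers of a common small base (for types $\zz/2$ and $\zz/4$) or by adjoining a translation by a specific $2$-torsion point (for types $(\zz/2)^2$ and $\zz/4 \times \zz/2$), mirroring cases (1)--(4) of Theorem \ref{bagnera}.

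For each candidate $(X_G, D_G)$ I would verify the conclusion by computing the log-Chern invariants $\ol{c}_1^2(X_G, D_G) = (K_{X_G}+D_G)^2$ and $\ol{c}_2(X_G,D_G) = e(X_G \ssm D_G)$, checking equality in log-BMY to confirm the ball quotient structure, computing $\chi(\oo_{X_G})$ and $q(X_G)$ to certify Kodaira dimension zero with irregularity one, and then identifying the action of $G$ on the Albanese fiber to pin down which of the four topological types has been produced. The divisor calculus from \S \ref{sec:Punctures} provides the main self-consistency check: Theorem \ref{thm:AllGeodesic} forces $X_G$ to be the blowup of its bielliptic minimal model at $n$ distinct points with $D_G \cdot E_j = 4$ for each exceptional curve, so matching $-D_G^2 = 4n$ against the direct cusp computation both validates the construction and identifies the number of blown-up points.

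The main obstacle is the explicit arithmetic bookkeeping required in the search for the subgroups $\Gam_G$: one must simultaneously guarantee torsion-freeness, the rotation-free parabolic condition, the correct index inside a suitable congruence lattice over $\zz[i]$, and compatibility of the induced $G$-action with a prescribed Bagnera--de Franchis action on $A_0 = E_i \times E_i$. In practice this is carried out with a computer algebra system such as Magma, by enumerating low-index normal subgroups of a fixed parent lattice, discarding those that contain elliptic or rotation-parabolic elements, and testing the residual candidates against the Albanese computation. Once a single example is produced for the $\zz/4 \times \zz/2$ case it is possible to descend to $\zz/4$, $(\zz/2)^2$, and $\zz/2$ by taking intermediate overgroups, so the essential case to resolve is the largest group action, after which the other three types follow from the subgroup lattice of $\zz/4 \times \zz/2$.
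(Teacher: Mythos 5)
Your proposal does not address the statement it is attached to. Theorem \ref{bagnera} is the Bagnera--de Franchis classification: the assertion that \emph{every} bielliptic surface is a quotient $(E_\lambda \times E_\tau)/G$ for one of seven explicitly listed group actions. What you have sketched is instead a construction of ball quotient compactifications whose minimal models realize the four Gaussian types --- that is, an outline of Theorem \ref{thm:MainExists}, which is a different (existence, not classification) statement. Nothing in your proposal engages with the classification problem: you never take an arbitrary minimal surface of Kodaira dimension zero and irregularity one and show it must have the stated form, and you never explain why the list of groups is exactly $\zz/2$, $(\zz/2)^2$, $\zz/3$, $(\zz/3)^2$, $\zz/4$, $\zz/4\times\zz/2$, $\zz/6$ and no others. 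Constructing examples of some of the seven types cannot establish that the list is exhaustive.

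For the record, the paper does not prove Theorem \ref{bagnera} either; it is a classical 1907 result quoted from \cite[Thm.\ VI.20]{Bea}. A genuine proof runs roughly as follows: for a minimal surface $S$ with $\kappa = 0$ and $q = 1$, the Albanese map is an elliptic fibration over an elliptic curve $E_\tau$, and one shows $S \cong (E_\lambda \times E_\tau)/G$ where $G$ acts on $E_\tau$ by translations and on $E_\lambda$ with quotient $\pp^1$; since the stabilizer in $\aut(E_\lambda)$ of a point is cyclic of order $2$, $3$, $4$, or $6$ --- with orders $4$ and $6$ forcing $\lambda = i$ and $\lambda = e^{2\pi i/3}$ respectively --- one enumerates the finitely many admissible $G$, yielding precisely the seven types. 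If your intent was to prove Theorem \ref{thm:MainExists}, you should also note that the paper's route there is fully explicit (arrangements of elliptic curves on $E\times E$ and verification of equality in log-BMY) rather than a computer search through normalizers of a Picard modular lattice, and your claim that the $\zz/2$ and $(\zz/2)^2$ cases ``descend'' from the $\zz/4\times\zz/2$ case via the subgroup lattice is backwards: covers of a $\zz/4$ bielliptic give the \emph{larger}-group types, which is why the paper constructs $\zz/2$ and $\zz/4$ directly and obtains $(\zz/2)^2$ and $\zz/4\times\zz/2$ as \'etale covers.
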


\medskip

\noindent
\textbf{Realizing $\zz/2$.}

\medskip

To begin our construction, let $E$ be the elliptic curve associated with the Gaussian lattice $\zz[1, i]=\zz+\zz i$, where $i^2 = -1$. In this section, we construct a $\zz/2\zz$ bielliptic ball quotient modeled on the Abelian surface $A=E \times E$. To this aim, consider the degree two automorphism $\varphi: A\rightarrow A$ given by
\begin{equation}\label{twisted}
\varphi(w, z)=\left(-w+\frac{1+i}{2}\,,\, z+\frac{1+i}{2}\right).
\end{equation}
Note that $\varphi$ generates a free action of $\zz/2$ on $A$. Let
\[
\pi: A\rightarrow B = A / \langle \varphi \rangle
\]
be the associated degree two \'etale quotient. This is a slightly different version of the ``standard'' $\zz/2$-action appearing in the Bagnera--de Franchis classification. We adopt this one for computational convenience, and discuss the connection with the standard action below. In what follows, $(w,z)$ will denote coordinates on $A$ and $[w,z]$ the coordinates on $B$ for $\pi(w,z)$.

Next, we define elliptic curves
\begin{align*}
E_{1}&=(z, z) & E_{2}&=\left(iz+\frac{1+i}{2}, z\right) \\
E_{3}&=(-z, z) & E_{4}&=\left(-iz+\frac{1+i}{2}, z\right)
\end{align*}
on the Abelian surface $A$. These curves are constructed so that
\begin{equation}\label{involution1}
\varphi(E_{1})=E_{3}\quad \quad \varphi(E_{2})=E_{4}.
\end{equation}
Moreover, one can easily check that:
\begin{align*}
E_{1} \cap E_{3}&=\left\{(0, 0), \left(\frac{1}{2}, \frac{1}{2}\right), \left(\frac{i}{2}, \frac{i}{2}\right), \left(\frac{1+i}{2}, \frac{1+i}{2}\right)\right\}\\
E_{2} \cap E_{4}&=\left\{\left(0, \frac{1+i}{2}\right), \left(\frac{1}{2}, \frac{1}{2}\right), \left(\frac{i}{2}, \frac{i}{2}\right), \left(\frac{1+i}{2}, 0\right)\right\}\\
E_{1} \cap E_{2}&=E_{1} \cap E_{4}= E_{2} \cap E_{3}=E_{3} \cap E_{4}=\left\{\left(\frac{1}{2}, \frac{1}{2}\right), \left(\frac{i}{2}, \frac{i}{2}\right)\right\}
\end{align*}
Furthermore, consider the two vertical and two horizontal elliptic curves in $A$ defined by the equations:
\begin{align*}
F_{1}&=\big(w, 0\big) & F_{3}&=\big(0, z\big) \\
F_{2}&=\left(w, \frac{1+i}{2}\right) & F_{4}&=\left(\frac{1+i}{2}, z\right)
\end{align*}
These curves satisfy
\begin{equation}\label{involution2}
\varphi(F_{1})=F_{2}\quad \quad \varphi(F_{3})=F_{4},
\end{equation}
and we easily check that:
\begin{align*}
F_{1} \cap F_{3}=E_{1} \cap F_{1}=E_{1} \cap F_{3}=E_{3} \cap F_{1}=E_{3} \cap F_{3}&=\left\{(0, 0)\right\} \\
F_{1} \cap F_{4}=E_{2} \cap F_{1}=E_{2} \cap F_{4}=E_{4} \cap F_{1}=E_{4} \cap F_{4}&=\left\{\left(\frac{1+i}{2}, 0\right)\right\} \\
F_{2} \cap F_{3}=E_{2} \cap F_{2}=E_{2} \cap F_{3}=E_{4} \cap F_{2}=E_{4} \cap F_{3}&=\left\{\left(0, \frac{1+i}{2}\right)\right\} \\
F_{2} \cap F_{4}=E_{1} \cap F_{2}=E_{1} \cap F_{4}=E_{3} \cap F_{2}=E_{3} \cap F_{4}&=\left\{\left(\frac{1+i}{2}, \frac{1+i}{2}\right)\right\}
\end{align*}
See Figure \ref{fig:AbelianArrangement}.
\begin{figure}
\begin{center}
\begin{tikzpicture}
\useasboundingbox (-2,-1.5) rectangle (7.5,7.5);
\draw [very thick, yellow!30, orange!70, rounded corners] (-0.5, 7) -- (0,6) -- (2,2) .. controls (4,-2) and (2, 9) .. (4,4) -- (6,0) -- (6.5, -1);
\draw [very thick, yellow!30, orange!70, rounded corners] (-1, 6.5) -- (0,6) .. controls (1,5) and (1,5) .. (2,2) .. controls (4.3,-4) and (2,8) .. (4, 4) .. controls (5,1) and (5,1) .. (6,0) -- (7, -0.5);
\draw [line width=4pt, white] (-1,-1) -- (7,7);
\draw [line width=4pt, white, rounded corners] (-1, -0.5) -- (0,0) -- (1,0.5) -- (2,2) -- (3,3.5) -- (4,4) -- (5, 4.5) -- (6,6) -- (6.5, 7);
\draw [very thick, green] (-1,-1) -- (7,7);
\draw [very thick, green, rounded corners] (-1, -0.5) -- (0,0) -- (1,0.5) -- (2,2) -- (3,3.5) -- (4,4) -- (5, 4.5) -- (6,6) -- (6.5, 7);
\draw [very thick, red] (-1,0) -- (7,0);
\draw [very thick, red] (-1,6) -- (7,6);
\draw [very thick, blue] (0,-1) -- (0,7);
\draw [very thick, blue] (6,-1) -- (6,7);
\draw (-1.25,0) node [red] {$F_1$};
\draw (-1.25,6) node [red] {$F_2$};
\draw (0, -1.25) node [blue] {$F_3$};
\draw (6, -1.25) node [blue] {$F_4$};
\draw (-1.25,-1.25) node [green] {$E_1$};
\draw (6.5,7.25) node [green] {$E_3$};
\draw (-0.5,7.25) node [yellow!30, orange!70] {$E_2$};
\draw (6.75,-1) node [yellow!30, orange!70] {$E_4$};
\draw (0.4, -0.25) node [font=\footnotesize] {$(0,0)$};
\draw (6.8, 5.65) node [font=\footnotesize] {$\left(\frac{1+i}{2},\frac{1+i}{2}\right)$};
\draw (0.6, 6.35) node [font=\footnotesize] {$\left(0,\frac{1+i}{2}\right)$};
\draw (5.4, -0.4) node [font=\footnotesize] {$\left(\frac{1+i}{2}, 0\right)$};
\draw (1.45, 2.05) node [font=\footnotesize] {$\left(\frac{1}{2}, \frac{1}{2}\right)$};
\draw (4.6, 3.9) node [font=\footnotesize] {$\left(\frac{i}{2}, \frac{i}{2}\right)$};
\filldraw (0,0) circle [black, radius=0.075cm];
\filldraw (0,6) circle [black, radius=0.075cm];
\filldraw (6,0) circle [black, radius=0.075cm];
\filldraw (6,6) circle [black, radius=0.075cm];
\filldraw (2,2) circle [black, radius=0.075cm];
\filldraw (4,4) circle [black, radius=0.075cm];
\end{tikzpicture}
\caption{The arrangement of curves on the Abelian surface.}\label{fig:AbelianArrangement}
\end{center}
\end{figure}
Concluding, the curves $\{E_{j}\}^{4}_{j=1}$ and $\{F_{j}\}^{4}_{j=1}$ meet only in the following six points:
\[
\mathcal{P} = \left\{(0, 0)\,,\, \left(\frac{1}{2}, \frac{1}{2}\right)\,,\, \left(\frac{i}{2}, \frac{i}{2}\right)\,,\, \left(\frac{1+i}{2}, 0\right)\,,\, \left(0, \frac{1+i}{2}\right)\,,\, \left(\frac{1+i}{2}, \frac{1+i}{2}\right) \right\}
\]
Moreover, there are exactly four elliptic curves passing through each of these points.

Let be $Y$ the blowup of $A$ at the points in $\mathcal{P}$. Further, let $\{C_{j}\}^{4}_{j=1}$ and $\{D_{j}\}^{4}_{j=1}$ denote the proper transforms of the elliptic curves $\{E_{j}\}^{4}_{j=1}$ and $\{F_{j}\}^{4}_{j=1}$ to $Y$. Finally, set
\[
\mathcal{D}_0 = \sum_{j = 1}^4(C_j + D_j) \subset Y.
\]
We now have the following:

\begin{lemma}\label{lem:AbelianEx}
The pair $(Y, \mathcal{D}_0)$ is the smooth toroidal compactification of a complex hyperbolic surface with eight cusps and Euler number six.
\end{lemma}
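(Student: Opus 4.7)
The plan is to verify the three conditions characterizing a ball quotient compactification recalled at the beginning of \S\ref{sec:Punctures}: that $\mathcal{D}_0$ is a disjoint union of smooth elliptic curves of negative self-intersection, that $K_Y+\mathcal{D}_0$ is big and nef, and that the logarithmic Bogomolov--Miyaoka--Yau equality $\ol{c}_1^2 = 3\,\ol{c}_2$ holds for $(Y,\mathcal{D}_0)$.

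\textbf{Step 1: Structure of $\mathcal{D}_0$.} First I would show that the eight curves $E_j,F_j$ on $A$ meet only at the six points of $\mathcal{P}$, and that at each such point the four curves passing through it have pairwise distinct tangent directions. The latter follows from a direct slope computation, using that each curve is a translate of the graph of a group homomorphism $E\to E$. Consequently the blowup $\pi:Y\to A$ of $\mathcal{P}$ separates all these curves, so the proper transforms $C_j,D_j$ are pairwise disjoint on $Y$. Each is a smooth elliptic curve with self-intersection $E_j^2-4=-4$ (resp.\ $F_j^2-4=-4$), since $E_j^2=F_j^2=0$ on $A$ and each passes through exactly four of the six points of $\mathcal{P}$.

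\textbf{Step 2: Logarithmic Chern numbers.} Since $e(A)=0$ and each blowup adds one, $e(Y)=6$; since the components of $\mathcal{D}_0$ are elliptic, $e(\mathcal{D}_0)=0$, so $\ol{c}_2(Y,\mathcal{D}_0)=6$. For $\ol{c}_1^2$, write $K_Y=\sum_{k=1}^6\widehat{E}_k$ with $\widehat{E}_k$ the exceptional divisor over $p_k$, and $\mathcal{D}_0=\pi^*L-4\sum_k\widehat{E}_k$ with $L=\sum_j E_j+\sum_j F_j$ on $A$, so $K_Y+\mathcal{D}_0=\pi^*L-3\sum\widehat{E}_k$. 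Using that $E_j\cdot E_l$ equals the degree of the isogeny given by the difference of slopes---yielding $4$ for $(E_1,E_3)$ and $(E_2,E_4)$ and $2$ for the four other pairs---that $F_i\cdot F_j\in\{0,1\}$ with the two ``parallel'' pairs contributing $0$, and that $E_j\cdot F_l=1$ in all sixteen cases, one obtains $L^2=32+8+32=72$, hence $\ol{c}_1^2=72-54=18=3\,\ol{c}_2$.

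\textbf{Step 3: Nefness, bigness, and conclusion.} This is the main step, since bigness will then follow from nefness together with $(K_Y+\mathcal{D}_0)^2=18>0$. I would test nefness on three classes of irreducible curves. On exceptional divisors, $(K_Y+\mathcal{D}_0)\cdot\widehat{E}_k=-1+4=3$. On components of $\mathcal{D}_0$, adjunction on the elliptic curve $C_j$ gives $(K_Y+C_j)\cdot C_j=0$, and since $C_j$ is disjoint from $\mathcal{D}_0-C_j$ this yields $(K_Y+\mathcal{D}_0)\cdot C_j=0$, and similarly for $D_j$. For any other irreducible curve $C$, let $\bar{C}=\pi(C)$ on $A$ and $m_k=\mult_{p_k}\bar{C}$; then $C=\pi^*\bar{C}-\sum m_k\widehat{E}_k$ and
\[
(K_Y+\mathcal{D}_0)\cdot C \;=\; L\cdot\bar{C}\;-\;3\sum_k m_k.
\]
Since four curves of $L$ pass smoothly through each $p_k$, the local intersection multiplicity of each with $\bar{C}$ at $p_k$ is at least $m_k$, so $L\cdot\bar{C}\ge 4\sum_k m_k$ and therefore $(K_Y+\mathcal{D}_0)\cdot C\ge\sum_k m_k\ge 0$. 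With nefness and bigness in hand, the equality $\ol{c}_1^2=3\,\ol{c}_2$ together with the log-BMY characterization from \S\ref{sec:Punctures} identifies $Y\setminus\mathcal{D}_0$ with a ball quotient $\B^2/\Gam$, and the eight cusps correspond to the eight connected components of $\mathcal{D}_0$.

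The main delicate point is the nefness verification on general curves: it rests entirely on the combinatorial fact that every point of $\mathcal{P}$ lies on exactly four curves of the configuration, which is also the mechanism forcing the log-BMY equality. The transversality of the four tangent directions at each $p_k$, established in Step~1, is the essential feature that makes the arithmetic work out.
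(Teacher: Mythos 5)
Your overall strategy is the same as the paper's: verify the log-BMY equality $\ol{c}_1^2 = 3\,\ol{c}_2$ for the pair, check that $K_Y + \mathcal{D}_0$ is nef and big, and invoke the characterization of ball quotient compactifications recalled at the start of \S \ref{sec:Punctures}. The paper's proof is essentially just the Chern number computation, with nefness and bigness asserted as ``visible,'' so your Step 3 --- testing nefness on exceptional divisors, on components of $\mathcal{D}_0$ via adjunction, and on all remaining irreducible curves via $(K_Y+\mathcal{D}_0)\cdot C = L\cdot\bar{C} - 3\sum_k m_k \ge \sum_k m_k$ using that exactly four configuration curves pass through each point of $\mathcal{P}$ --- is a correct and genuinely useful amplification of what the paper leaves implicit.

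There is, however, one factual error in Step 1: it is not true that each $F_j$ passes through four of the six points of $\mathcal{P}$. The horizontal and vertical curves $F_1,\dots,F_4$ each contain exactly \emph{two} points of $\mathcal{P}$ (e.g.\ $F_1=(w,0)$ meets $\mathcal{P}$ only in $(0,0)$ and $\left(\tfrac{1+i}{2},0\right)$, while $\left(\tfrac12,\tfrac12\right)$ and $\left(\tfrac{i}{2},\tfrac{i}{2}\right)$ lie only on the four curves $E_j$). Hence $D_j^2=-2$, not $-4$, whereas $C_j^2=-4$ as you say. This is exactly what the paper's tally $\ol{c}_1^2=K_Y^2-\mathcal{D}_0^2=-6+4\cdot 4+4\cdot 2=18$ records; with your claimed self-intersections one would get $-6+8\cdot 4=26$, contradicting your own (correct) Step 2 value $L^2-54=72-54=18$. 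Fortunately the slip does not propagate: Step 2 computes $\ol{c}_1^2$ through $L^2$ rather than through the individual self-intersections, and Step 3 uses only that the components of $\mathcal{D}_0$ are disjoint smooth elliptic curves. Once the sentence about the $F_j$ is corrected, the argument is complete.
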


\begin{proof}
One easily computes that
\begin{align*}
\overline{c}^{2}_{1}(Y, \mathcal{D}_0)&= K_Y^2 - \mathcal{D}_0^2 \\
&=-6+4+4+4+4+2+2+2+2 \\
&=18,
\end{align*}
and clearly $e(Y \ssm \mathcal{D}_0) = 6$, since $Y$ is the blowup of an Abelian surface at six points. Therefore $\overline{c}^{2}_{1}(Y, \mathcal{D}_0)=18=3\, \overline{c}_{2}(Y, \mathcal{D}_0)$, hence we have equality in the log-BMY inequality. Since $K_Y + \mathcal{D}_0$ is visibly nef and big, the lemma follows.
\end{proof}

We are now ready to construct the bielliptic example. Define the following curves on the bielliptic surface $B$:
\begin{align*}
G_1 &= \pi(E_1) = \pi(E_3) & G_2 &= \pi(E_2) = \pi(E_4) \\
H_1 &= \pi(F_1) = \pi(F_2) & H_2 &= \pi(F_3) = \pi(F_4)
\end{align*}
Note that this is well defined by \eqref{involution1} and \eqref{involution2}. We then have the following lemma.

\begin{lemma}\label{bquotient}
The curves $G_1, G_2$ are singular elliptic curves with exactly two nodes, and $H_1, H_2$ are smooth elliptic curves. Moreover, we have the following intersections:
\[
\begin{cases} & G_{1} \cap G_{2}=\left\{\left[\frac{1}{2}, \frac{1}{2}\right]\right\} \\
& H_{1} \cap H_{2}=\left\{[0, 0], \left[\frac{1+i}{2}, 0\right]\right\} \\
& G_{1} \cap H_{1}=G_{1} \cap H_{2}=\left\{[0, 0]\right\} \\
& G_{2} \cap H_{1}=G_{2} \cap H_{2}=\left\{\left[\frac{1+i}{2}, 0\right]\right\} \\
\end{cases}
\]
\end{lemma}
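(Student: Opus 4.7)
The plan is to analyze the restriction of the quotient map $\pi : A \to B$ to each of the curves $E_j$ and $F_j$, and then to compute intersections on $B$ by taking preimages in $A$, where all intersections have been explicitly listed.

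First, I would handle $H_1$ and $H_2$. Since $F_1 = \{(w,0)\}$ and $F_2 = \varphi(F_1) = \{(w, \frac{1+i}{2})\}$ are \emph{disjoint} in $A$, the preimage $\pi^{-1}(H_1) = F_1 \sqcup F_2$ is a disjoint union and $\pi$ being étale of degree $2$ makes $\pi|_{F_1} : F_1 \to H_1$ a degree-one étale morphism, hence an isomorphism of smooth elliptic curves. The identical argument using $F_3, F_4$ yields that $H_2$ is a smooth elliptic curve.

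Next, I would treat $G_1$ and $G_2$. Because $\varphi$ swaps $E_1$ and $E_3$ but $E_1 \cap E_3$ is nonempty (it consists of four points), the map $\pi|_{E_1} : E_1 \to G_1$ is birational but not an isomorphism; in fact it is the normalization, so $G_1$ is a curve of geometric genus $1$ whose only possible singularities lie in $\pi(E_1 \cap E_3)$. Near any $p \in E_1 \cap E_3$, the curves $E_1$ and $E_3$ meet transversally (their tangent directions are $(1,1)$ and $(-1,1)$ respectively), so the two local branches of $G_1$ at $\pi(p)$ are distinct and transverse, producing a node. Computing $\varphi$ on the four points of $E_1 \cap E_3$ using the formula \eqref{twisted} shows that they split into exactly two $\varphi$-orbits, namely $\{(0,0),(\frac{1+i}{2},\frac{1+i}{2})\}$ and $\{(\frac{1}{2},\frac{1}{2}),(\frac{i}{2},\frac{i}{2})\}$; hence $G_1$ has exactly two nodes, located at $[0,0]$ and $[\frac{1}{2},\frac{1}{2}]$. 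The same analysis carried out for $E_2, E_4$ yields the nodes of $G_2$.

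For the intersection table, I would simply use $\pi^{-1}(G_1) = E_1 \cup E_3$, $\pi^{-1}(G_2) = E_2 \cup E_4$, $\pi^{-1}(H_1) = F_1 \cup F_2$, $\pi^{-1}(H_2) = F_3 \cup F_4$; the preimage of each of $G_a \cap G_b$, $G_a \cap H_b$, $H_a \cap H_b$ is then a union of four of the pairwise intersections already listed in the excerpt, and the $\varphi$-action pairs these points, collapsing them to the claimed single point (for all intersections except $H_1 \cap H_2$, which gives two points because the preimage consists of two distinct $\varphi$-orbits).

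The only step with any subtlety is verifying the local picture at the singular points of the $G_j$; however, since the $E_j$ are translates of graphs of linear maps and their intersections are transverse, this reduces to a direct inspection of tangent vectors, so no genuine obstacle arises. The bulk of the work is purely bookkeeping on orbits of $\varphi$ restricted to the finite set $\mathcal{P}$.
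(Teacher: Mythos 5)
Your proposal is correct and follows essentially the same route as the paper's (much terser) proof: both establish that $G_1, G_2$ acquire exactly two nodes by observing that $E_1\cap E_3$ and $E_2\cap E_4$ are transverse four-point sets splitting into two free $\varphi$-orbits, and both read off the intersection table by pulling back to the explicitly listed intersections on $A$ and collapsing $\varphi$-orbits. Your additional checks (the tangent-direction verification of transversality and the smoothness of $H_1,H_2$ via disjointness of $F_1,F_2$ and of $F_3,F_4$) are details the paper leaves implicit.
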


\begin{proof}
The curves $E_{1}$ and $E_{3}$ meet transversally in four points, and these points are the disjoint union of two orbits under the action of $\varphi$. Thus, the irreducible curve $G_{1}$ has exactly two nodal singularities at the points $[0, 0]$ and $\left[\frac{1}{2}, \frac{1}{2}\right]$. Similarly, $G_{2}$ has exactly two nodal singularities at the points $\left[\frac{1+i}{2}, 0\right]$ and $\left[\frac{1}{2}, \frac{1}{2}\right]$. Finally, the intersection points of the smooth curves $H_1, H_2$ are easily computed.
\end{proof}

In conclusion, the curves $\{G_{j}\}^{2}_{j=1}$ and $\{H_{j}\}^{2}_{j=1}$ meet only in the following three points:
\begin{align}\label{3intersections}
\left\{[0, 0]\,,\, \left[\frac{1+i}{2}, 0\right]\,,\, \left[\frac{1}{2}, \frac{1}{2}\right]\right\}.
\end{align}
See Figure \ref{fig:BiellipticArrangement}.
\begin{figure}
\begin{center}
\begin{tikzpicture}
\draw [very thick, red] (0,-1) -- (0,7);
\draw [very thick, blue, rounded corners] (-1,-1) -- (3,3) -- (-1,7);
\draw [very thick, green, rounded corners] (0.5,-1) -- (-1,2) -- (-3.5,3.25) -- (-3.5, 3.5) -- (-3.25, 3.5) -- (-2,1) -- (1,-0.5);
\draw [very thick, yellow!30, orange!70, rounded corners] (0.5,7) -- (-1,4) -- (-3.5,2.75) -- (-3.5,2.5) -- (-3.25,2.5) -- (-2,5) -- (1,6.5);
\draw (0,-1.25) node [red] {$H_1$};
\draw (-1.15, -1.25) node [blue] {$H_2$};
\draw (1.3, -0.5) node [green] {$G_1$};
\draw (1.3, 6.5) node [yellow!30, orange!70] {$G_2$};
\draw (0.6, 0.1) node [font=\footnotesize] {$[0,0]$};
\draw (0.9, 5.9) node [font=\footnotesize] {$\left[\frac{1+i}{2},0\right]$};
\draw (-2.15, 3) node [font=\footnotesize] {$\left[\frac{1}{2},\frac{1}{2}\right]$};
\filldraw (0,0) circle [black, radius=0.075cm];
\filldraw (0,6) circle [black, radius=0.075cm];
\filldraw (-3,3) circle [black, radius=0.075cm];
\end{tikzpicture}
\caption{The arrangement of curves on the bielliptic surface.}\label{fig:BiellipticArrangement}
\end{center}
\end{figure}
Let $X$ denote the blowup of $B$ at the three points in \eqref{3intersections}, $A_j$ be the proper transform in $X$ of the singular elliptic curve $G_j$, and $B_j$ be the proper transform of the smooth elliptic curve $H_j$ ($j=1,2$). Note that $A_j$ is a smooth elliptic curve. Finally, let $\mathcal{D}$ be the divisor on $X$ determined by these four elliptic curves.

\begin{lemma}\label{example3}
The pair $(X, \mathcal{D})$ is the smooth toroidal compactification of a complex hyperbolic surface with four cusps and Euler number three.
\end{lemma}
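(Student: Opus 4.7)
The plan is to reduce the lemma to Lemma~\ref{lem:AbelianEx} by lifting the \'etale double cover $\pi : A \to B$ to a cover of the compactification. First I would verify that the three points of $B$ blown up to form $X$ have preimages under $\pi$ equal to exactly the six points of $\mathcal{P}$ blown up to form $Y$. This is a direct computation with the free involution $\varphi$: for instance $\varphi(0,0) = \left(\frac{1+i}{2}, \frac{1+i}{2}\right)$, $\varphi\left(\frac{1+i}{2}, 0\right) = \left(0, \frac{1+i}{2}\right)$, and $\varphi\left(\frac{1}{2},\frac{1}{2}\right) = \left(\frac{i}{2}, \frac{i}{2}\right)$ modulo $\zz[i]$. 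Consequently $\pi$ lifts to a degree two \'etale morphism $\widetilde{\pi} : Y \to X$.

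Next, I would identify $\widetilde{\pi}^{-1}(\mathcal{D})$ with $\mathcal{D}_0$. By \eqref{involution1} and \eqref{involution2}, each component of $\mathcal{D}$ pulls back to a pair of components of $\mathcal{D}_0$, giving $\widetilde{\pi}^{-1}(\mathcal{D}) = \mathcal{D}_0$ as divisors. Since $\widetilde{\pi}$ is \'etale and $\mathcal{D}_0$ is a disjoint union of smooth elliptic curves with negative self-intersection by Lemma~\ref{lem:AbelianEx}, $\mathcal{D}$ enjoys the same properties \'etale-locally, hence globally; moreover the eight components of $\mathcal{D}_0$ pair up via the deck involution into the four components of $\mathcal{D}$, accounting for the four cusps.

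The log Chern numbers and the bigness/nefness of $K_X + \mathcal{D}$ both transfer cleanly through the cover. \'Etaleness gives $\widetilde{\pi}^{*}(K_X + \mathcal{D}) = K_Y + \mathcal{D}_0$, which is big and nef by Lemma~\ref{lem:AbelianEx}; since $\widetilde{\pi}$ is finite surjective, $K_X + \mathcal{D}$ is itself big and nef. Intersection numbers scale by the degree of $\widetilde{\pi}$, so
\[
\overline{c}_{1}^{2}(X, \mathcal{D}) \;=\; \tfrac{1}{2}\, \overline{c}_{1}^{2}(Y, \mathcal{D}_0) \;=\; 9,
\]
and the Euler characteristic is multiplicative over \'etale covers, giving
\[
\overline{c}_{2}(X, \mathcal{D}) \;=\; e(X \ssm \mathcal{D}) \;=\; \tfrac{1}{2}\, e(Y \ssm \mathcal{D}_0) \;=\; 3.
\]
Thus $\overline{c}_{1}^{2}(X,\mathcal{D}) = 3\, \overline{c}_{2}(X, \mathcal{D})$, equality holds in log-BMY, and the characterization recalled at the beginning of \S\ref{sec:Punctures} identifies $(X, \mathcal{D})$ as a ball quotient compactification with four cusps and Euler number three.

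The only step that requires genuine care is the verification that $\pi$ lifts to an \'etale cover with the correct preimage of $\mathcal{D}$, which is the $\varphi$-computation above together with \eqref{involution1}--\eqref{involution2}; everything else is then a formal consequence of Lemma~\ref{lem:AbelianEx}. As an alternative, more hands-on route one could compute $K_X^{2} = -3$ and $\mathcal{D}^{2} = -12$ directly from the proper-transform formulas using $G_j^{2} = 4$ and $H_j^{2} = 0$ on $B$ (both obtained by pulling back to $A$ through $\pi$), and then check nefness and disjointness component-by-component; but this requires a tangent direction analysis at each of the three blown-up points which the covering argument sidesteps entirely.
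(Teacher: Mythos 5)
Your proof is correct, but it takes a different route from the paper's. The paper's own proof is a direct computation on $(X,\mathcal{D})$ mirroring Lemma \ref{lem:AbelianEx}: one computes $K_X^2=-3$ and $\mathcal{D}^2=-12$ from the proper-transform formulas (using $G_j^2=4$, $H_j^2=0$, multiplicity $2$ at the nodes of $G_j$ and $1$ elsewhere), checks $e(X\ssm\mathcal{D})=3$, verifies $K_X+\mathcal{D}$ is big and nef, and invokes equality in log-BMY --- exactly the ``alternative, more hands-on route'' you mention at the end. Your main argument instead deduces everything from Lemma \ref{lem:AbelianEx} via the lifted \'etale double cover $\widetilde{\pi}:Y\to X$, and the key verifications are all in order: the three blown-up points of $B$ do pull back to the six points of $\mathcal{P}$ (your $\varphi$-orbit computation is right), $\widetilde{\pi}^{-1}(\mathcal{D})=\mathcal{D}_0$ by \eqref{involution1}--\eqref{involution2}, and bigness, nefness, $(K_X+\mathcal{D})^2=9$, and $e(X\ssm\mathcal{D})=3$ all descend through the degree-two cover. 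What your approach buys is that the structural facts about $\mathcal{D}$ --- that its four components are smooth, disjoint elliptic curves, in particular that the branches at the blown-up points of $B$ separate after a single blowup --- come for free from the corresponding facts about $\mathcal{D}_0$ already established upstairs, rather than requiring a fresh tangent-direction check at the nodes of $G_1,G_2$; it also makes transparent that $X\ssm\mathcal{D}$ is the free $\zz/2$ quotient of the ball quotient $Y\ssm\mathcal{D}_0$, which is how the paper implicitly thinks of the example anyway. The paper's direct computation is shorter to write down given that Lemma \ref{lem:AbelianEx} has already set the template, but is logically independent of the cover.
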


\begin{proof}
Exactly as in the proof of Lemma \ref{lem:AbelianEx}, we see that
\[
\overline{c}^{2}_{1}(X, \mathcal{D}) = 9 = 3\, \overline{c}_{2}(X, \mathcal{D})
\]
with $K_X + \mathcal{D}$ nef and big, so the lemma follows.
\end{proof}


Thus $X \ssm \mathcal{D}$ is a ball quotient with smooth toroidal compactification a $\zz/2$ bielliptic surface.

\medskip

\noindent
\textbf{Relationship with the standard bielliptic involution.}

\medskip

We now briefly describe how to write the above example in terms of the \emph{standard} bielliptic involution $\varphi_{1}: A\rightarrow A$ given by
\begin{equation}\label{standard}
\varphi_1(w, z)=\left(-w\,,\, z+\frac{1+i}{2}\right).
\end{equation}
To start, consider the automorphism
\begin{align*}
\psi&: E \times E \rightarrow E \times E \\
\psi(w, z) &=\left(iw+\frac{1+3i}{4}\,,\, z+\frac{3+i}{4}\right),
\end{align*}
and observe that
\begin{align*}
(\psi\circ\varphi)(w, z)&=\left(-iw+\frac{3+i}{4}\,,\, z +\frac{1+3i}{4}\right) \\
(\varphi_1\circ\psi)(w, z)&=\left(-iw+\frac{3+i}{4}\,,\, z+\frac{1+3i}{4}\right).
\end{align*}
We conclude that $\psi\circ\varphi\circ\psi^{-1}=\varphi_{1}$, and hence $\psi$ descends to a map $\psi_1: B\rightarrow B$.

One easily checks that:
\begin{align*}
\psi(E_{1})&=\left(iz+\frac{1}{2}, z\right) & \psi(F_{1})&=\left(w, \frac{3+i}{4}\right) \\
\psi(E_{2})&=\left(-iz+\frac{i}{2}, z\right) & \psi(F_{2})&=\left(w, \frac{1+3i}{4}\right) \\
\psi(E_{3})&=\left(-z+\frac{1+i}{2}, z\right) & \psi(F_{3})&=\left(\frac{1+3i}{4}, z\right) \\
\psi(E_{4})&=(z,z) & \psi(F_{4})&=\left(\frac{3+i}{4}, z\right)
\end{align*}
The intersections between these curves are at
\[
\left\{\left(\frac{1+i}{4}, \frac{1+i}{4}\right)\,,\, \left(\frac{1+3i}{4}, \frac{1+3i}{4}\right)\,,\, \left(\frac{1+3i}{4}, \frac{3+i}{4}\right)\,,\, \left(\frac{3+i}{4}, \frac{1+3i}{4}\right),\right.
\]
\[
\left.\left(\frac{3+i}{4}, \frac{3+i}{4}\right)\,,\, \left(\frac{3+3i}{4}, \frac{3+3i}{4}\right)\right\}.
\]
Blowing up $A$ at these points, we obtain automorphisms $Y \to Y$ and $X \to X$ that we still denote by $\psi$ and $\psi_1$. Then, the pair $(Y, \psi(\mathcal{D}_0))$ has $\psi(\mathcal{D}_0)$ stable under the standard bielliptic involution $\varphi_1$. It follows that the pairs $(Y, \psi(\mathcal{D}_0))$ and $(X, \psi_1(\mathcal{D}))$ then determine ball quotient compactifications biholomorphic to those arising from $(Y, \mathcal{D}_0)$ and $(X, \mathcal{D})$, respectively.

\medskip

\noindent
\textbf{Relationship with Holzapfel's Abelian example.}

\medskip

In \cite[\S 4.6]{Hol04}, Holzapfel constructed an Abelian ball quotient compactification based on the Gaussian integers using the Abelian surface $A$ from above. His construction uses the curves:
\begin{align*}
E_{1}^\prime&=\left(z, z\right) & F_{1}^\prime&=\left(w\,,\, \frac{1}{2}\right) \\
E_{2}^\prime&=\left(-z, z\right) & F_{2}^\prime&=\left(w\,,\, \frac{i}{2}\right) \\
E_{3}^\prime&=\left(iz, z\right) & F_{3}^\prime&=\left(\frac{1}{2}\,,\, z\right) \\
E_{4}^\prime&=(-iz,z) & F_{4}^\prime&=\left(\frac{i}{2}\,,\, z\right)
\end{align*}
These are visibly equivalent to the arrangement $\mathcal{D}_0$ under translation by $\left(\frac{1}{2}, \frac{1}{2}\right)$.

\medskip

\noindent
\textbf{Realizing $\zz/4$.}

\medskip

Let $A^\prime$ be the Abelian surface defined by the subgroup $\zz[1,i] \times \zz[4,i]$ of index $4$ in $\zz[1,i] \times \zz[1, i]$. Consider the lifts to $A^\prime$ of the eight curves on $A$ in \S \ref{sec:MainEx} under the natural map $A^\prime \to A$. This is an arrangement of fourteen elliptic curves in $A^\prime$ given by the equations:
\begin{align*}
E_1^\prime &= (z,z) & F_{2,0}^\prime &= \left(w, \frac{1+i}{2}\right) \\
E_2^\prime &= \left(iz + \frac{1+i}{2},z\right) & F_{2,1}^\prime &= \left(w, \frac{3+i}{2}\right) \\
E_3^\prime &= (-z,z) & F_{2,2}^\prime &= \left(w, \frac{5+i}{2}\right) \\
E_4^\prime &= \left(-iz + \frac{1+i}{2},z\right) & F_{2,3}^\prime &= \left(w, \frac{7+i}{2}\right) \\
F_{1,0}^\prime &= (w, 0) & F_3^\prime &= (0,z) \\
F_{1,1}^\prime &= (w, 1) & F_4^\prime &= \left(\frac{1+i}{2},z\right) \\
F_{1,2}^\prime &= (w, 2) & & \\
F_{1,3}^\prime &= (w, 3) & &
\end{align*}
It is easy to show that these curves meet in $24$ points of $A^\prime$. More precisely, we have intersections as in Table \ref{tb:Z4}.
\begin{table}
\centering
\begin{tabular}{|c|c|}
\hline
Point ($0 \le j \le 3$) & Curves through that point \\
\hline
$(0, j)$ & $E_1^\prime, E_3^\prime, F_{1,j}^\prime, F_3^\prime$ \\
\hline
$\left(\frac{1}{2}, \frac{1}{2} + j\right)$ & $E_1^\prime, E_2^\prime, E_3^\prime, E_4^\prime$ \\
\hline
$\left(\frac{i}{2}, \frac{i}{2} + j\right)$ & $E_1^\prime, E_2^\prime, E_3^\prime, E_4^\prime$ \\
\hline
$\left(\frac{1+i}{2}, \frac{1+i}{2} + j\right)$ & $E_1^\prime, E_3^\prime, F_{2, j}^\prime, F_4^\prime$ \\
\hline
$\left(\frac{1+i}{2}, j\right)$ & $E_2^\prime, E_4^\prime, F_{1,j}^\prime, F_4^\prime$ \\
\hline
$\left(0, \frac{1+i}{2} + j\right)$ & $E_2^\prime, E_4^\prime, F_{2, j}^\prime, F_3^\prime$ \\
\hline
\end{tabular}
\caption{Intersection points on the Abelian surface for the $\zz/4$ bielliptic.}\label{tb:Z4}
\end{table}

If $Y^\prime$ is the blowup of $A^\prime$ at these $24$ points, consider the divisor $\mathcal{D}_0^\prime$ determined by the proper transforms of the above curves. One has that $\overline{c}_1^2(Y^\prime, \mathcal{D}_0^\prime) = 3 \overline{c}_2(Y^\prime, \mathcal{D}_0^\prime)$, which then implies that $Y^\prime \ssm \mathcal{D}_0^\prime$ is a ball quotient with fourteen cusps.

We now consider the map $\varphi : A^\prime \to A^\prime$ of order $4$ given by
\[
\varphi(w, z) = \left(i w + \frac{1+i}{2}, z + 1\right).
\]
One checks directly that we have
\begin{align*}
\varphi(E_j^\prime) &= E_{j+1}^\prime & \varphi(F_3^\prime) &= \varphi(F_4^\prime) \\
\varphi(F_{k,j}^\prime) &= F_{k, j+1}^\prime & \varphi(F_4^\prime) &= \varphi(F_3^\prime)
\end{align*}
with $k = 1,2$ and $j$ considered mod $4$. Let $\pi: A^\prime\rightarrow B^\prime$ be the bielliptic quotient of $A^\prime$ associated with this $\zz / 4$ action, and define
\begin{align*}
G_{1}&=\pi(E^\prime_{j}) & G_{2}&=\pi(F^\prime_{1, j}) \\
G_{3}&=\pi(F^\prime_{2, j}) & G_{4}&=\pi(F^\prime_3)=\pi(F^\prime_4).
\end{align*}
Observe that $G_{1}$ is singular elliptic curve with 4 nodal singularities and 2 singular points of degree four. On the other hand, the elliptic curves $G_2, G_3, G_4$ are smooth. See Figure \ref{fig:Z4} for the arrangement of curves and their intersection points.
\begin{figure}
\hspace*{-2cm}
\begin{tikzpicture}[scale = 1.21]
\useasboundingbox (-6,-3.5) rectangle (6,3.5);
\draw [very thick, blue, rounded corners] (1,1) .. controls (1,7) and (-1,-5) .. (-1,-1);
\draw [very thick, blue, rounded corners] (1,1) .. controls (0.75,5.5) and (-1.25,-6.5) .. (-1,-1);
\draw [line width = 4pt, white, rounded corners] (-1,-1) .. controls (-1,0) and (-1,2) .. (-2.4, 2.4) .. controls (-5,2.2) and (-5,-2.2) .. (-2.4, -2.4) .. controls (-1.9, -2.4) and (-1.6, -2.3) .. (-1, -1) .. controls (-0.5,0.5) and (-0.5,0.5) .. (1,1) .. controls (1.4, 1.1) and (2, 1.5) .. (2.4, 2.4) .. controls (5.2,6) and (5.2,-6) .. (2.4, -2.4) .. controls (1.9, -1.75) and (1.6, -1.5) .. (1,1);
\draw [line width = 4pt, white, rounded corners] (-1,-1) .. controls (-1.6, 1.5) and (-1.9, 1.75) .. (-2.4, 2.4) .. controls (-5.2,6) and (-5.2,-6) .. (-2.4, -2.4) .. controls (-2, -1.5) and (-1.6, -1.1) .. (-1, -1) .. controls (0.5,-0.5) and (0.5,-0.5) .. (1,1) .. controls (1.4, 1.9) and (2, 2.4) .. (2.4, 2.4) .. controls (5,2.2) and (5,-2.2) .. (2.4, -2.4) .. controls (1,-2) and (1,0) .. (1,1);
\draw [very thick, blue, rounded corners] (-1,-1) .. controls (-1,0) and (-1,2) .. (-2.4, 2.4) .. controls (-5,2.2) and (-5,-2.2) .. (-2.4, -2.4) .. controls (-1.9, -2.4) and (-1.6, -2.3) .. (-1, -1) .. controls (-0.5,0.5) and (-0.5,0.5) .. (1,1) .. controls (1.4, 1.1) and (2, 1.5) .. (2.4, 2.4) .. controls (5.2,6) and (5.2,-6) .. (2.4, -2.4) .. controls (1.9, -1.75) and (1.6, -1.5) .. (1,1);
\draw [very thick, blue, rounded corners] (-1,-1) .. controls (-1.6, 1.5) and (-1.9, 1.75) .. (-2.4, 2.4) .. controls (-5.2,6) and (-5.2,-6) .. (-2.4, -2.4) .. controls (-2, -1.5) and (-1.6, -1.1) .. (-1, -1) .. controls (0.5,-0.5) and (0.5,-0.5) .. (1,1) .. controls (1.4, 1.9) and (2, 2.4) .. (2.4, 2.4) .. controls (5,2.2) and (5,-2.2) .. (2.4, -2.4) .. controls (1,-2) and (1,0) .. (1,1);
\draw [very thick, blue] (-1,-1) -- (-1, -1.2);
\draw [very thick, blue] (1,1) -- (1, 1.2);
\draw (0,0) [very thick, red] ellipse (4 and 3);
\draw [very thick, green] (2.4,3.2) -- (2.4,-3.2);
\draw [very thick, yellow!30, orange!70] (-2.4,3.2) -- (-2.4,-3.2);
\filldraw (2.4,2.4) circle [black, radius=0.075cm];
\filldraw (2.4,-2.4) circle [black, radius=0.075cm];
\filldraw (-2.4,2.4) circle [black, radius=0.075cm];
\filldraw (-2.4,-2.4) circle [black, radius=0.075cm];
\filldraw (-1,-1) circle [black, radius=0.075cm];
\filldraw (1,1) circle [black, radius=0.075cm];
\draw (4.8,0) node [blue] {$G_1$};
\draw (-2.4, -3.45) node [yellow!30, orange!70] {$G_2$};
\draw (2.4, -3.45) node [green] {$G_3$};
\draw (0, -3.25) node [red] {$G_4$};
\draw (-3,-2.55) node {$[0,0]$};
\draw (3.3,2.6) node {$\left[\frac{1+i}{2}, \frac{1+i}{2}\right]$};
\draw (3.2,-2.58) node {$\left[0, \frac{1+i}{2}\right]$};
\draw (1.5,0.85) node {$\left[\frac{i}{2}, \frac{i}{2}\right]$};
\draw (-1.5,-0.8) node {$\left[\frac{1}{2}, \frac{1}{2}\right]$};
\draw (-3.15,2.58) node {$\left[\frac{1+i}{2},0\right]$};
\end{tikzpicture}
\caption{The curves $G_i$ on the $\zz / 4$ bielliptic surface $B^\prime$.}\label{fig:Z4}
\end{figure}
Now, let $X$ be the blowup of $B^\prime$ at the $6$ points
\[
\left\{[0, 0]\,,\, \left[\frac{1}{2}, \frac{1}{2}\right]\,,\,\left[\frac{i}{2}, \frac{i}{2}\right]\,,\,\left[\frac{1+i}{2}, 0\right]\,,\, \left[0, \frac{1+i}{2}\right]\,,\, \left[\frac{1+i}{2}, \frac{1+i}{2}\right]\right\}
\]
and let $\{D_{j}\}^{4}_{j=1}$ be the proper transforms of the curves $\{G_{j}\}^{4}_{j=1}$. One easily computes
\[
D^2_{1}=-16 \,,\, D^2_{2}=-2 \,,\, D^2_3=-2 \,,\, D^2_4=-4.
\]
Thus, let $\mathcal{D}$ be the union of these four elliptic curves. Given the pair $(X, \mathcal{D})$, it is immediate to verify that $\overline{c}_1^2(X, \mathcal{D}) = 3\, \overline{c}_2(X, \mathcal{D})$. Thus $X \ssm \mathcal{D}$ is a ball quotient with compactification that is by construction birational to a $\zz / 4$ bielliptic surface.

\medskip

We briefly note that one can relate $\varphi$ to the standard action by translation on the first factor by $\frac{1+i}{2}$. Also, notice that this example is commensurable with the $\zz/2$ example constructed above.

\medskip

\noindent
\textbf{The proof of Theorem \ref{thm:MainExists}.}

\medskip

Above we constructed examples associated with $\zz/2$ and $\zz/4$. It remains to consider $(\zz/2)^2$ and $\zz/4 \times \zz/2$. We claim that any $\zz/4$ bielliptic surface admits a finite \'etale cover by a bielliptic surface of type $\zz/4 \times \zz/2$ or $(\zz/2)^2$. It follows immediately that one can find coverings of the above $\zz/4$ ball quotient whose compactifications are bielliptic surfaces associated with the remaining two groups; see \cite[Lem.\ 1.3]{DS16}. We leave it to the motivated reader to work out these arrangements in coordinates.

To prove the claim, the fundamental group of any $\zz/4$ bielliptic surface is isomorphic to the group of affine transformations of $\cc^2$ generated by:
\begin{align*}
a(w, z) &= (w+1, z) &&& b(w, z) &= (w+i, z) \\
c(w, z) &= (w, z+1) &&& d(w, z) &= (w, z+i) \\
&& e(w,z) &= \left(iw, z + \frac{1}{4}\right) &&
\end{align*}
This has abstract presentation
\begin{align*}
G = \Big\langle a,b,c,d,e\ |\ &[a,b]\,,\,[a,c]\,,\,[a,d]\,,\,[b,c]\,,\,[b,d]\,,\,[c,d] \\
&e a e^{-1} = b\,,\,e b e^{-1} = a^{-1}\,,\,[e,c]\,,\,[e,d]\,,\,e^4=c\Big\rangle.
\end{align*}
One can easily connect this presentation to the $\zz/4$-action on the Abelian surface associated with $\langle a,b,c,d \rangle$ via the exact sequence
\[
1 \to \langle a,b,c,d \rangle \to G \to \zz/4 \to 1
\]
sending $e$ to a generator for $\zz/4$. Note that the given bielliptic surface may not be biholomorphic to the quotient of $\cc^2$ by this particular action, but all $\zz/4$ bielliptic surfaces have isomorphic fundamental groups.

First, consider the subgroup $H$ of $G$ generated by $b a^{-1}$, $a^{-1} b^{-1}$, $c$, $d a^{-1}$, and $e$. This is clearly index two in $G$, and we claim that it determines a $\zz/4 \times \zz/2$ bielliptic. One can compute directly from the action on the abelian surface that a $\zz/4 \times \zz/2$ bielliptic has abstract fundamental group
\begin{align*}
H^\prime = \Big\langle x,y,u,v,r,s\ |\ &[x,y]\,,\,[x,u]\,,\,[x,v]\,,\,[y,u]\,,\,[y,v]\,,\,[u,v] \\
&r x r^{-1} = y\,,\,r y r^{-1} = x^{-1}\,,\,[r,u]\,,\,[r,v]\,,\,r^4=u \\
&[s, x]\,,\,[s,y]\,,\,[s,u]\,,\,[s,v]\,,\,r s r^{-1} = s x^{-1}\,,\,s^2 = x y v\Big\rangle.
\end{align*}
In order to derive this presentation, one can simply look at the $\zz/4\times\zz/2$ bielliptic generated by the commuting automorphisms
\[
r(w, z)=\left(iw \,,\, z+\frac{1}{4}\right), \quad s(w,z)=\left(w+\frac{1+i}{2} \,,\, z+\frac{i}{2}\right)
\]
acting on the abelian surface $\cc^2 / (\zz[i] \times \zz[i])$ with coordinates $(w,z)$. This fits into an exact sequence
\[
1 \to \langle x,y,u,v \rangle \to H^\prime \to \zz/4 \times \zz/2 \to 1,
\]
sending $r$ to a generator for $\zz/4$ and $s$ to a generator for $\zz/2$. One can then identify $H^\prime$ with $H$ by:
\begin{align*}
\langle x,y,u,v \rangle &\to \langle b a^{-1}, a^{-1} b^{-1}, c, d^2 \rangle \\
r &\to e \\
s &\to d a^{-1}
\end{align*}
This gives a $\zz/4 \times \zz/2$ bielliptic as a two-fold \'etale cover of a $\zz/4$ bielliptic.

It is then easy to see that the subgroup of $H$ generated by $b a^{-1}$, $a^{-1} b^{-1}$, $c$, $d a^{-1}$, and $e^2$ has index two in $H$ and determines a two-fold \'etale covering of a $\zz/4 \times \zz/2$ bielliptic by a $(\zz/2)^2$ bielliptic. We leave the details to the reader. This completes the proof of the claim, and hence the theorem.\qed

\section*{Appendix: A ball quotient compactification with $q=0$.}

In this section, using the computer algebra software Magma \cite{Magma}, we indicate that there exists a smooth toroidal compactification of a ball quotient with irregularity $q = 0$.\footnote{Code is available on the authors' webpages.} This shows that the previous belief that such an example cannot exist is incorrect, motivating further study of ball quotient compactifications of irregularity zero. In particular, it remains an interesting question whether a rational, K3, or Enriques surface could possibly be a ball quotient compactification.

Our group is a subgroup of the Picard modular group associated with the ring $\zz[i]$ of Gaussian integers. A presentation for this lattice, determined by Falbel, Francsics, and Parker \cite{FFP} is
\[
\Gam = \left\langle i \,,\, q \,,\, t\ |\ i^2 \,,\, q^2 \,,\, (i q)^3 \,,\, (i t)^{12} \,,\, (i q t)^8 \,,\, [(i t)^3 , t] \,,\, [q , t] \right\rangle.
\]
The associated ball quotient orbifold has Euler number $1/32$ \cite[\S 8.3]{FFP}. Consider finite group
\begin{align*}
F = \Big\langle\alpha \,,\, \beta \,,\, \gamma\ |\ &\alpha^2 \,,\, \beta^2 \,,\, \gamma^4 \,,\, [\beta,\gamma] \,,\, (\alpha \beta)^3 \,,\, [\alpha,\gamma]^3 \,,\, (\alpha \beta \gamma)^8\,, \\
&\gamma^{-2} \beta \alpha \gamma^{-1} \beta \gamma^{-1} \alpha \gamma^{-2} \beta \alpha \,,\, \alpha \gamma \beta \gamma^2 \beta \gamma \alpha \gamma^4 \,,\, (\alpha \gamma^{-1})^3 (\alpha \gamma)^3 \Big\rangle
\end{align*}
of order $1536$. Up to automorphisms of $F$ there is a unique homomorphism
\[
\rho : \Gamma \to F.
\]
Let $\Delta$ be the kernel of $\rho$. Magma shows $\Delta$ has finite abelianization $(\zz/2)^9 \times (\zz/4)^6$. A presentation for $\Delta$ has $15$ generators and $149$ relations, which we omit.

Using \cite[\S 8.3]{FFP}, one can explicitly determine the conjugacy classes of finite-order elements in $\Gam$. Since $\Del$ is normal in $\Gam$, one can then check directly with Magma that $\Del$ is torsion-free by verifying that $\Del$ contains none of these representatives. Therefore, $\Del$ is the fundamental group of a smooth noncompact ball quotient of Euler number $1536/32 = 48$.

To show that $\B^2 / \Del$ admits a smooth toroidal compactification, it suffices to show that all parabolic elements in $\Del$ are rotation-free. The orbifold $\B^2 / \Gam$ has a unique cusp, hence a unique conjugacy class of parabolic subgroups. A representative $B$ for this group is described in \cite[\S 8.2]{FFP}, which includes both an abstract presentation for $B$ and its generators as words in the generators for $\Gam$.

Consider the quotient homomorphism
\[
\rho : \Gam \to F = \Gam / \Del.
\]
Since $\B^2 / \Delta$ is a Galois orbifold cover of $\B^2 / \Gam$ with group $F$, we see that $\B^2 / \Gam$ has $[F : \rho(B)] = 24$ cusps, which all have fundamental group isomorphic to the subgroup $\ker(\rho|_B)$ of $B$. Using the above abstract presentation for $B$ and knowing $\rho(r)$, $\rho(q)$, and $\rho(t)$, Magma can compute that the kernel has presentation
\[
\langle \delta_1 \,,\, \delta_2 \,,\, \delta_3\ |\ [\delta_1, \delta_2] \delta_3^{-4} \rangle.
\]
As in \cite[Prop.\ 4.2.12]{HolzBook}, this shows that each cusp admits a smooth compactification by a smooth elliptic curve of self-intersection $-4$.

Let $X$ be the smooth toroidal compactification of $\B^2 / \Del$. Since $\Del$ has finite abelianization, it follows that $q(X) = 0$ (e.g., see \cite{DS17}). Let $D$ be the compactifying divisor, so $X \ssm D = \B^2 / \Del$. Then, from $\overline{c}_1^2 = 3 \overline{c}_2$, we see that
\[
K_X^2 - D^2 = 3\, c_2(X) = 144.
\]
Since $D$ consists of $24$ disjoint elliptic curves of self-intersection $-4$, we obtain $K_X^2 = 48$. It is then easy to complete the following table of characteristic numbers:
\begin{align*}
c_1^2(X) = c_2(X) &= 48 \\
\chi(\mathcal{O}_X) &= 8 \\
p_g(X) &= 7
\end{align*}
It is not hard to see from here that $X$ has minimal model of general type, but we do not know if $X$ is minimal. It would be interesting to understand this surface better.


\end{document}